\documentclass[12pt]{amsart}

\oddsidemargin=0.5cm
\evensidemargin=0.5cm
\textwidth=16cm
\textheight=21cm

\usepackage{color}
\definecolor{red}{rgb}{1.00,0.00,0.00}

\numberwithin{equation}{section}

\newtheorem{theorem}{Theorem}[section]
\newtheorem{lemma}[theorem]{Lemma}
\newtheorem{corollary}[theorem]{Corollary}
\newtheorem{proposition}[theorem]{Proposition}
\newtheorem{example}[theorem]{Example}

\newtheorem{remark}{Remark}
\newtheorem{definition}[theorem]{Definition}

\def\sA{\langle A\rangle}
\def\sB{\langle B\rangle}
\def\sC{\langle C\rangle}

\def\sS{\langle S\rangle}

\def\rA{k[A]}
\def\rB{k[B]}
\def\rC{k[C]}
 \def\rS{k[S]}
\def\C{\mathfrak{C}}
\def\N{\mathbb{N}}
\def\Q{\mathbb{Q}}
\def\Z{\mathbb{Z}}

\def\PP{\mathbb{P}}
\def\F{\mathbb{F}}
\def\a{{\bf a}}
\def\b{{\bf b}}
\def\c{{\bf c}}
\def\d{{\bf d}}

\def\u{{\bf u}}
\def\t{{\bf t}}
\def\xx{{\bf x}}
\def\yy{{\bf y}}
\def\iff{if and only if}
\def\rk#1{\hbox{\rm rank}\,(#1)}
\newcommand{\pd}{\mathrm {pd }}
\newcommand{\depth}{\mathrm {depth\, }}
\begin{document}
\title{On gluing semigroups in $\N^n$ and the consequences}

\author{Philippe Gimenez and Hema Srinivasan}
 \address{Instituto de Matem\'aticas de la Universidad de Valladolid (IMUVA),
Facultad de Ciencias, 47011 Valladolid, Spain.}
 \email{pgimenez@agt.uva.es}
 \thanks{The first author was partially supported by  PID2019-104844GB-I00,
{\it Ministerio de Ciencia e Innovaci\'on} (Spain).} 

 \address{Mathematics Department, University of
Missouri, Columbia, MO 65211, USA.}
 \email{SrinivasanH@missouri.edu}

\thanks{{\bf Keywords}: semigroup rings, gluing, degenerate semigroups, Cohen-Macaulay rings. }

\begin{abstract} 
A semigroup $\sC$ in $\N^n$ is a gluing of $\sA$ and $\sB$ if its finite set of generators $C$ splits into two parts, $C=k_1A\sqcup k_2B$ 
with $k_1,k_2\geq 1$, and  the defining ideals of the  corresponding semigroup rings satisfy that  $I_C$ is generated by $I_A+I_B$ and one extra element.
Two semigroups $\sA$ and $\sB$  can be glued if there exist positive integers $k_1,k_2$ such that, for $C=k_1A\sqcup k_2B$, $\sC$ is
a gluing of $\sA$ and $\sB$.
Although any two numerical semigroups, namely semigroups in dimension $n=1$,  can always be glued, it is no longer the case in higher dimensions.  In this paper, we give  necessary and sufficient conditions on $A$ and $B$ for the existence of a gluing of $\sA$ and $\sB$, and give examples to illustrate why they are necessary.  These generalize and explain the previous known results on existence of gluing.   We also prove that the glued semigroup $\sC$  inherits the properties like Gorenstein or Cohen-Macaulay from the two parts $\sA$ and $\sB$.
\end{abstract}

\maketitle

\begin{flushright}
{\em Dedicated to Professor J\"urgen Herzog on his 80th Birthday.}
\end{flushright}
\null\vskip 1cm
 
\section*{Introduction}

Given a finite subset of $\N^n$, $A=\{\a_1,\ldots,\a_p\}$. let $\sA$  denote the semigroups generated
 by $A$ over an arbitrary field $k$.  The  $n\times p$ matrix of natural numbers, whose columns 
are $\a_1,\ldots,\a_p$ will also be denoted by $A$. The semigroup rings $\rA$ is  the $k$-algebra generated by the monomials in
$k[\t]=k[t_1,\ldots,t_n]$ associated to the elements in $A$:  $\rA=k[\t^{\a_j},\ 1\leq j\leq p]$. 
If we consider the $k$-algebra homomorphism $\phi_A: k[x_1, \ldots, x_p]\to k[t_1, \ldots, t_n]$ given by $\phi_A(x_j) =\t^{\a_j} =\prod_{i=1}^n t_i^{a_{ij}}$, one has that $\rA\simeq k[x_1, \ldots, x_p]/I_A$ where $I_A$ is the kernel of $\phi_A$.   The ideal $I_A$ is a binomial ideal which is prime.  Further,  $\dim{\rA}=\rk{A}$ (\cite[Lem. 4.2]{sturm}).  The semigroup $\sA$ is said to be degenerate if $\rk{A}<n$.  The semigroup $\sA$ is of embedding dimension $p$ if the elements in $A$ minimally generate $\sA$.

\smallskip
Let $A=\{\a_1,\ldots,\a_p\}$ and $B=\{\b_1,\ldots,\b_q\}$ be two disjoint sets in $\N^n$ minimally generating $\sA$ and $\sB$ respectively.  Let $\rA\simeq k[x_1, \ldots, x_p]/I_A$ and $\rB\simeq k[y_1, \ldots, y_q]/I_B$ be the corresponding semigroup rings. 
Take two positive integers $k_1$ and $k_2$, and consider the set $C=k_1A\sqcup k_2B$ and the semigroup $\sC$. 
Setting $R=k[x_1,\ldots,x_p,y_1,\ldots,y_q]$, one has that $\rC\simeq R/I_C$.
We say that $\sC$ is a {\it gluing} of $\sA$ and $\sB$ if $I_C$ is generated by $I_A\cdot R+I_B\cdot R$ and one extra element $\rho\in R$ of the form
$\rho=\xx^\c-\yy^\d$ for some $\c\in\N^p$ and $\d\in\N^q$.
When this occurs, we will denote by $C = k_1A\Join k_2B$.
And we say that $\sA$ and $\sB$ {\it can be glued} if there exists $k_1$ and $k_2$ such that $C = k_1A\Join k_2B$.

\smallskip

The case when $n=1$ is well-understood: in this case one can always assume w.l.g. that $\gcd{(a_1,\ldots,a_p)}=1$ and $\gcd{(b_1,\ldots,b_q)}=1$,
i.e., that $\sA$ and $\sB$ are numerical semigroups.  Two numerical semigroups can always be glued: for any $k_1\in\sB$ and $k_2\in\sA$,
one has that $C = k_1A\Join k_2B$.
The case where one of the two semigroup rings has rank 1 is also completely charaterized in \cite[Thms. 3 and 5]{sforum20} where we say when and how two semigroups
can be glued when $\dim\rB=1$.
These results will be generalized  in this paper where we give necessary and sufficient conditions on $A$ and $B$ 
when the two semigroups $\sA$ and $\sB$ can be glued.  
The results explain the conditions in \cite[Thms 3 and 5]{sforum20}. 

\smallskip

The concept of gluing has its origin in the classical papers by Herzog \cite{He} and Delorme \cite{De} where the structure of complete intersection
numerical semigroups was established.  In \cite{He} Herzog completely characterized the semigroups generated by three positive integers and in \cite{De} Delorme extended the characterization to complete intersections  minimally generated by $n$ positive integers. The concept was later formally defined by Rosales in \cite{rosales}
in order to study complete intersection semigroups in $\N^n$. A characterization of complete intersection semigroups in terms of gluing is
obtained in \cite{RG} for simplicial semigroups, and finally generalized in \cite{FMS} for arbitrary semigroups in $\N^n$: 
A semigroup is a complete intersection
if and only if it is a gluing on two complete intersections of smaller embedding dimension.
In this paper, we show in Theorem \ref{thmCM} that the gluing of two semigroups is a complete intersection, respectively Gorenstein or Cohen-Macaulay  if and only if
both parts have the same properties. 

\smallskip

The structure of the paper is as follows. In Section \ref{secDimDepth} we use the results previously obtained in \cite{jpaa19} on the structure of
the minimal graded free resolution of semigroups obtained by gluing to express the projective dimension of $\rC$ in terms of the projective dimensions of
$\rA$ and $\rB$ when $C = k_1A\Join k_2B$. We then prove that the same relation holds between the dimension of the three rings and deduce
the expected characterization for Cohen-Macaulay, Gorenstein and complete intersection glued semigroups (Theorem \ref{thmCM}).
In Section \ref{secWhenGluing}, we give two conditions on $A$ and $B$ that must hold when $\sA$ and $\sB$ can be glued. We  then
give sufficient conditions, very close to the previous necessary ones, that imply that $\sA$ and $\sB$ can be glued (Theorem \ref{thmHowToGlue}). Using this, 
we recover in Section \ref{secConsequences} the already known results on numerical semigroups and when $\dim\rB=1$.  We consider the homogeneous case and show how to glue in $\N^3$ two nondegenerate homogeneous semigroups in $\N^2$ setting them as degenerate semigroups in $\N^3$.  We give examples to illustrate many of our results.

\section{Dimension, depth and Cohen-Macaylayness}\label{secDimDepth}

Consider finite subsets $A=\{\a_1,\ldots,\a_p\}$, $B=\{\b_1,\ldots,\b_q\}$ and $C=k_1A\sqcup k_2B$ of $\N^n$ where
$k_1,k_2\geq 1$ are positive integers.  Let  $\sA$, $\sB$ and $\sC$ be the semigroups generated by these sets and $\rA\simeq k[x_1, \ldots, x_p]/I_A$, $\rB\simeq k[y_1, \ldots, y_q]/I_B$ and $\rC\simeq k[x_1, \ldots, x_p, y_1, \ldots, y_q]/I_C$ be the corresponding semigroup rings.  Let $R=  k[x_1, \ldots, x_p, y_1, \ldots, y_q]$.  
\begin{definition}\label{gluing} $\sC$ is said to be a  gluing of $\sA$ and $\sB$, written as $C = k_1A\Join k_2B$, if $I_C=I_A\cdot R+I_B\cdot R+\langle\rho\rangle$ with
$\rho=\xx^\c-\yy^\d$ for some $\c\in\N^p$ and $\d\in\N^q$. 
The binomial $\rho$ will be called a {\it gluing binomial}.  
\end{definition}
Note that if $C$ is degenerate, i.e., $\dim\rC=\rk C<n$, then we can embed all three  semigroups in a smaller dimension with non degenerate $C'$,  $A'$ and $B'$ with isomorphic semigroup rings, and $C'= k_1A'\Join k_2B'$.   Thus, we can always assume that {\bf $C$ is non degenerate} without loss of generality, i.e.,
$\rk{A|B}=n$. 

\smallskip
When $C = k_1A\Join k_2B$, a minimal graded free resolution of $\rC$ can be obtained from the minimal graded free resolutions of $\rA$ and $\rB$ as follows:

\begin{theorem}[{\cite[Thm. 6.1 \& Cor. 6.2]{jpaa19}}]\label{thmJPAA}
Suppose that $C= k_1A\Join k_2B$. Let $\rho$ be a gluing binomial, and $\F_A$ and $\F_B$ be minimal graded free resolutions of $\rA$ and $\rB$ respectively.
Then:

\begin{enumerate}
\item
$F_A\otimes F_B$ is a  minimal graded free resolution of $R/I_A\cdot R+I_B\cdot R$. 
\item
A  minimal graded free resolution of $\rC$ can be obtained as the mapping cone
of $\rho: \F_A\otimes \F_B\to \F_A\otimes \F_B$ where $\rho$ is induced by multiplication by $\rho$.  
\item
The Betti numbers of $\rC$ are, for all $i\geq 0$:
$$
\beta_i(\rC) =
\sum _{i'=0}^i \beta_{i'}(\rA)[\beta_{i-i'}(\rB)+\beta_{i-i'-1}(\rB)]=
\sum _{i'=0}^i \beta_{i'}(\rB)[\beta_{i-i'}(\rA)+\beta_{i-i'-1}(\rA)]\,.
$$  
\item\label{itPD}
$\pd(\rC)=\pd(\rA)+\pd(\rB)+1$.
\end{enumerate}
\end{theorem}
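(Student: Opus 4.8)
The plan is to realize $\rC$ as the cokernel of multiplication by $\rho$ on $R/(I_A\cdot R+I_B\cdot R)$ and to derive everything from a single key fact: that $\rho$ is a nonzerodivisor on that quotient. Write $J=I_A\cdot R+I_B\cdot R$. Since the variables split into two disjoint blocks, $R=k[x_1,\ldots,x_p]\otimes_k k[y_1,\ldots,y_q]$ and correspondingly $R/J\simeq\rA\otimes_k\rB$. Viewing $\F_A$ and $\F_B$ as complexes over $k[x_1,\ldots,x_p]$ and $k[y_1,\ldots,y_q]$ respectively, their tensor product $\F_A\otimes_k\F_B$ is naturally a complex of free $R$-modules (free $\otimes_k$ free is free over $R$). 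For part (1), I would compute its homology by the K\"unneth formula over the field $k$: as all terms are $k$-flat, $H_i(\F_A\otimes_k\F_B)=\bigoplus_{i'+i''=i}H_{i'}(\F_A)\otimes_k H_{i''}(\F_B)$, which is $\rA\otimes_k\rB\simeq R/J$ when $i=0$ and vanishes for $i>0$. Minimality is immediate: the entries of the two differentials lie in $(x_1,\ldots,x_p)$ and $(y_1,\ldots,y_q)$ respectively, so the total differential has all entries in $\m_R$.

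The heart of the argument, and the step I expect to be the main obstacle, is showing that $\rho$ is a nonzerodivisor on $R/J\simeq\rA\otimes_k\rB$. I would argue combinatorially using the monomial $k$-basis $\{\t^{k_1\gamma}\otimes\t^{k_2\delta}:\gamma\in\sA,\ \delta\in\sB\}$. Writing $\c'=\sum_jc_j\a_j\in\sA$ and $\d'=\sum_l d_l\b_l\in\sB$ (both nonzero, since a gluing binomial genuinely mixes the two variable sets), multiplication by $\rho$ carries the basis vector indexed by $(\gamma,\delta)$ to the difference of the basis vectors indexed by $(\gamma+\c',\delta)$ and $(\gamma,\delta+\d')$. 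If $f=\sum\lambda_{\gamma,\delta}(\,\cdot\,)$ satisfied $f\rho=0$, then comparing coefficients would give, for every $(\mu,\nu)$,
$$[\mu-\c'\in\sA]\,\lambda_{\mu-\c',\nu}=[\nu-\d'\in\sB]\,\lambda_{\mu,\nu-\d'}.$$
Starting from any nonzero $\lambda_{\gamma_0,\delta_0}$ and taking $(\mu,\nu)=(\gamma_0+\c',\delta_0)$, this forces $\delta_0-\d'\in\sB$ together with $\lambda_{\gamma_0+\c',\delta_0-\d'}\neq0$; iterating produces nonzero coefficients indexed by $(\gamma_0+m\c',\delta_0-m\d')$ for all $m\geq0$, whence $\delta_0-m\d'\in\sB\subseteq\N^n$ for every $m$, contradicting $\d'\neq0$. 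Thus $f=0$.

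Granting the nonzerodivisor property, parts (2)--(4) are formal. From the short exact sequence $0\to R/J\xrightarrow{\rho}R/J\to\rC\to0$ (with the appropriate degree shift on the source), I would lift multiplication by $\rho$ to a chain endomorphism of the minimal resolution $\F_A\otimes_k\F_B$ of $R/J$ and take the mapping cone, which is then a free resolution of $\coker(\rho)=\rC$; since $\rho\in\m_R$, the lift can be chosen with all entries in $\m_R$, so the cone is again minimal. For (3), the $i$-th term of the cone is $(\F_A\otimes_k\F_B)_i\oplus(\F_A\otimes_k\F_B)_{i-1}$, and $\beta_i(\F_A\otimes_k\F_B)=\sum_{i'}\beta_{i'}(\rA)\beta_{i-i'}(\rB)$; combining these two facts yields the stated formula, its symmetric form following by exchanging the roles of $A$ and $B$. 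For (4), the minimal resolution of $R/J$ has length $\pd(\rA)+\pd(\rB)$, and the mapping cone adds exactly one, giving $\pd(\rC)=\pd(\rA)+\pd(\rB)+1$.
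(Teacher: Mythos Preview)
The paper does not supply its own proof of this theorem: it is quoted verbatim from \cite{jpaa19} and used as a black box. So there is no in-paper argument to compare yours against. That said, your proposal is a correct and complete proof, and it is essentially the standard route one would expect \cite{jpaa19} to take as well: identify $R/J$ with $\rA\otimes_k\rB$, resolve it by the tensor product $\F_A\otimes_k\F_B$ via K\"unneth, prove that $\rho$ is a nonzerodivisor on $R/J$, and then read everything off the mapping cone of multiplication by $\rho$.

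Your combinatorial nonzerodivisor argument is clean and correct. The recursion you derive on the coefficients $\lambda_{\gamma,\delta}$ is exactly the coefficient of $\t^{\mu}\otimes\t^{\nu}$ in $f\rho$, and the descent on the $\sB$-coordinate terminates because $\d'=\sum_l d_l\b_l$ is a nonzero element of $\N^n$ (were $\d'=0$ one would get $\d=0$, whence $\rho\in I_A\cdot R$, contradicting the definition of a gluing binomial). One tiny remark: for the mapping cone to be \emph{graded} minimal you are implicitly using that $\rho$ is homogeneous in the $\sC$-grading (equivalently, $k_1A\cdot\c=k_2B\cdot\d$), which is exactly the condition $\rho\in I_C$; you allude to this with ``the appropriate degree shift on the source'', and it deserves one explicit sentence. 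For minimality of the cone you can simply take the lift of multiplication by $\rho$ to be $\rho\cdot\mathrm{id}$ on every $(\F_A\otimes_k\F_B)_i$; no comparison-theorem subtlety is needed.
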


The dimension and the depth of the ring of a semigroup obtained by gluing are given in the following:

\begin{theorem}\label{thmDimDepth}
If $C = k_1A\Join k_2B$, then:
\begin{enumerate}
\item\label{itDepth}
$\depth\,\rC=\depth\,\rA+\depth\,\rB-1$;
\item\label{itDim}
$\dim\rC=\dim\rA+\dim\rB-1$.
\end{enumerate}
\end{theorem}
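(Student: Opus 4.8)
The two statements are essentially independent. The plan is to deduce (\ref{itDepth}) from the projective-dimension formula in Theorem \ref{thmJPAA}(\ref{itPD}) together with the Auslander--Buchsbaum formula, and to obtain (\ref{itDim}) by exhibiting $\rC$ as the quotient of the affine domain $\rA\otimes_k\rB$ by a single nonzerodivisor. For the depth, note first that all three semigroup rings are positively graded: assigning $\deg x_j=|\a_j|=\sum_i a_{ij}$ makes $I_A$ homogeneous, and similarly for $B$ and $C$. Hence the graded Auslander--Buchsbaum formula applies, with depths taken at the respective irrelevant maximal ideals, and yields $\depth\rA=p-\pd\rA$, $\depth\rB=q-\pd\rB$ and $\depth\rC=(p+q)-\pd\rC$. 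Substituting $\pd\rC=\pd\rA+\pd\rB+1$ from Theorem \ref{thmJPAA}(\ref{itPD}) into the last identity and regrouping gives $\depth\rC=(p-\pd\rA)+(q-\pd\rB)-1=\depth\rA+\depth\rB-1$, which is (\ref{itDepth}).

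For the dimension, I would introduce two disjoint copies $s_1,\dots,s_n$ and $t_1,\dots,t_n$ of the ambient variables and the $k$-algebra map $\Psi\colon R\to k[s_1,\dots,s_n,t_1,\dots,t_n]$ sending $x_j\mapsto s^{\a_j}$ and $y_l\mapsto t^{\b_l}$. Because the $s$- and $t$-variables are independent, the $x$- and $y$-relations decouple and $\ker\Psi=I_A\cdot R+I_B\cdot R$; thus $\rA\otimes_k\rB\cong R/(I_A\cdot R+I_B\cdot R)$ is isomorphic to $\operatorname{im}\Psi$, a semigroup ring inside that polynomial ring and hence an affine domain. Its dimension equals the rank of the block-diagonal matrix with blocks $A$ and $B$, namely $\rk{A}+\rk{B}=\dim\rA+\dim\rB$. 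By Definition \ref{gluing}, $I_C=(I_A\cdot R+I_B\cdot R)+\langle\rho\rangle$, so $\rC\cong(\operatorname{im}\Psi)/(\Psi(\rho))$. Since cutting an affine domain by a nonzero nonunit lowers the dimension by exactly one, it remains only to check that $\Psi(\rho)=s^{\sum_j c_j\a_j}-t^{\sum_l d_l\b_l}$ is a nonzero nonunit; granting this, $\dim\rC=(\dim\rA+\dim\rB)-1$.

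The step that actually uses the gluing hypothesis, and the one I would be most careful about, is the nonvanishing of $\Psi(\rho)$. As $s$ and $t$ are disjoint sets of variables, $s^{\sum_j c_j\a_j}$ and $t^{\sum_l d_l\b_l}$ are distinct monomials unless both equal the constant $1$, i.e. unless $\c=\d=0$; hence $\Psi(\rho)=0$ would force $\rho=0$, contradicting that the gluing binomial is a genuine extra generator. Since $\rho$ has positive degree, $\Psi(\rho)$ is moreover a nonunit, which closes the argument. Equivalently, applying $\phi_C$ to $\rho$ gives $k_1\sum_j c_j\a_j=k_2\sum_l d_l\b_l\neq 0$, a nonzero vector spanning a line in $\operatorname{span}_\Q(A)\cap\operatorname{span}_\Q(B)$; this is the geometric content behind the rank identity $\rk{C}=\rk{A}+\rk{B}-1$ that underlies (\ref{itDim}).
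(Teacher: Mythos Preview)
Your proof of (\ref{itDepth}) is identical to the paper's: both invoke Theorem \ref{thmJPAA}(\ref{itPD}) and the Auslander--Buchsbaum formula.

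For (\ref{itDim}) your argument is correct but genuinely different from the paper's. The paper first notes (as you do) that $R/(I_A\cdot R+I_B\cdot R)$ has dimension $s+t$ where $s=\dim\rA$, $t=\dim\rB$, so $s+t-1\le\dim\rC\le s+t$. It then rules out $\dim\rC=s+t$ by a direct linear-algebra computation: if $\rk{C}=s+t$ one can pick $\Q$-linearly independent columns $\a_1,\dots,\a_s,\b_1,\dots,\b_t$, and for any binomial $w\in I_C$ the relation $\sum\alpha_j\a_j=\sum\beta_j\b_j$ forces both sides to vanish separately, whence $w\in I_A\cdot R+I_B\cdot R$; this contradicts that $\rho$ is an extra generator. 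Your route instead realises $R/(I_A\cdot R+I_B\cdot R)\cong\rA\otimes_k\rB$ as an affine \emph{domain} via the map $\Psi$ into $k[s,t]$, and then appeals to the standard fact (Krull's Hauptidealsatz plus the dimension formula for affine domains) that quotienting a domain by a nonzero nonunit drops the dimension by exactly one. The paper's approach is more self-contained and stays within the combinatorics of binomials; yours is shorter and more conceptual, trading the explicit computation for a clean structural statement. Both ultimately hinge on the same point, namely that $\rho$ (equivalently $\Psi(\rho)$) is not redundant, which is built into Definition \ref{gluing}.
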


\begin{proof}
(\ref{itDepth}) is a direct consequence of Theorem \ref{thmJPAA} (\ref{itPD}) and the Auslander-Buchsbaum formula: on one hand, $\pd{(\rC)}=\pd{(\rA)}+\pd{(\rB)}+1$, on the other
$\pd{(\rA)}=p-\depth\,\rA$, $\pd{(\rB)}=q-\depth\,\rB$ and $\pd{(\rC)}=p+q-\depth\,\rC$, and (\ref{itDepth}) follows.

\smallskip 

By definition,  $\rA\simeq k[x_1,\ldots,x_p]/I_A$, $\rB\simeq k[y_1,\ldots,y_q]/I_B$ and
$\rC\simeq R/I_C$ for $R=k[x_1,\ldots,x_p,y_1,\ldots,y_q]$ and 
$I_C=I_A\cdot R+ I_B\cdot R +\langle\rho\rangle$ for some binomial $\rho=\underbar{x}^\c-\underbar{y}^\d$ 
with $\c\in\N^p$ and $\d\in\N^q$.  Set $s=\dim\rA$ and $t=\dim\rB$.
As $I_A\cdot R$ and $I_B\cdot R$ are ideals in $R$ minimally generated by elements involving disjoint sets of variables,
$\dim R/(I_A\cdot R+I_B\cdot R)=\dim k[x_1,\ldots,x_p]/I_A+\dim k[y_1,\ldots,y_q]/I_B=\dim\rA+\dim\rB = s+t$.  Thus, for  $I_C=I_A\cdot R+ I_B\cdot R +\langle\rho\rangle$, we get

$$s+t-1\leq \dim \rC=\dim R/I_C\leq s+t$$
If $\dim \rC= s+t$, then $\rk A=s$, $\rk B=t$ and $\rk C=s+t$. 
So we can eventually rearrange the elements in $A$ and $B$ so that 
$\{\a_1,\ldots,\a_s,\b_1,\ldots,\b_t\}$ are $\Q$-linearly independent and write
$$
\forall i,\, s+1\leq i\leq p,\ \a_i=\sum_{k=1}^{s}a_{ki}\a_k,\ \hbox{ and }\ 
\forall i,\, t+1\leq i\leq q,\ \b_i= \sum_{k=1}^{t}b_{ki}\b_k,
$$
for some $a_{ki},b_{ki}\in\Q$.
Since $I_C$ is a binomial prime ideal, it is minimally generated by binomials that are the difference of two monomials of disjoint support.
Consider such a binomial $w$ in $I_C$.
Given an element $\gamma\in\Z^r$, denote 
$\gamma_+=\{j,\ 1\leq j\leq r\,/\ \gamma_j>0\}$ and $\gamma_-=\{j,\ 1\leq j\leq r\,/\ \gamma_j<0\}$.
Then, $w$ is of the form
$$
w=\prod_{j\in \alpha_+}x_j^{\alpha_j} \prod_{j\in \beta_-} y_j^{-\beta_j}-
           \prod_{j\in \alpha_-}x_j^{-\alpha_j}\prod_{j\in \beta_+}y_j^{\beta_j}
$$
for some $\alpha\in\Z^p$ and $\beta\in\Z^q$.
Then we have 
$\sum_{j=1}^{p}\alpha_j\a_j=\sum_{j=1}^{q}\beta_j\b_j$.
So we get
$$
(\alpha_1+\sum_{i=s+1}^{p}\alpha_i a_{1i} )\a_1+\cdots+(\alpha_s+\sum_{i=s+1}^{p}\alpha_i a_{si} )\a_s
-
(\beta_1+\sum_{i=t+1}^{q}\beta_i b_{1i} )\b_1-\cdots-(\beta_t+\sum_{i=t+1}^{q}\beta_i b_{ti} )\b_t=0
$$
and since these vectors are $\Q$-linearly independent, we have that the coefficients are all zero. Hence, we get that
$$
\sum_{i=1}^{p}\alpha_i\a_i=(\alpha_1+\sum_{i=s+1}^{p}\alpha_i a_{1i} )\a_1+\cdots+(\alpha_s+\sum_{i=s+1}^{p}\alpha_i a_{si} )\a_s=0,
$$
so for $f=\prod_{i\in \alpha_+}x_i^{\alpha_i}-\prod_{i\in \alpha_-}x_i^{-\alpha_i}$, one has that $f\in I_C\cap k[x_1,\ldots,x_p]=I_A$.
Similarly, $g=\prod_{i\in \beta_+} y_i^{\beta_i}-\prod_{i\in \beta_-}y_i^{-\beta_i}\in I_B$.
Then $w=(\prod_{i\in \beta_+} y_i^{\beta_i}) f+(\prod_{i\in \alpha_-}x_i^{-\alpha_i}) g\in I_A\cdot R+I_B\cdot R$.
We have shown that if $\dim \rC= \dim\rA+\dim\rB$, then $I_C= I_A\cdot R+I_B\cdot R$, a contradiction.
Thus, $\dim\rC=\dim\rA+\dim\rB-1$.
\end{proof}

A direct consequence is the following result proved in \cite[Thm. 2]{sforum20} assuming that $\rA$ and $\rB$ are Cohen-Macaulay. This hypothesis can now be removed.

\begin{theorem}\label{thmDeg}
If $n\ge 2$, then $\sA$ and $\sB$ can not be glued unless at least one of $\sA$ and $\sB$ is degenerate. 
\end{theorem}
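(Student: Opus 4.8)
The plan is to derive a contradiction from the dimension formula in Theorem \ref{thmDimDepth}(\ref{itDim}) under the assumption that neither $\sA$ nor $\sB$ is degenerate. Suppose $\sA$ and $\sB$ can be glued, so there exist $k_1,k_2\geq 1$ with $C=k_1A\Join k_2B$. The first step is to translate all three dimensions into ranks of the generating matrices, using $\dim\rA=\rk A$, $\dim\rB=\rk B$ and $\dim\rC=\rk C$ (from \cite[Lem. 4.2]{sturm}). Since scaling the columns of a matrix by the positive integers $k_1,k_2$ does not change its rank, $\rk C=\rk{A|B}$, the rank of the $n\times(p+q)$ matrix obtained by concatenating $A$ and $B$. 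Theorem \ref{thmDimDepth}(\ref{itDim}) then reads $\rk{A|B}=\rk A+\rk B-1$.

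The second step is to plug in the non-degeneracy hypothesis. If both $\sA$ and $\sB$ are non-degenerate, then $\rk A=\rk B=n$, so the formula gives $\rk{A|B}=2n-1$. But the matrix $A|B$ has only $n$ rows, so $\rk{A|B}\leq n$; hence $2n-1\leq n$, forcing $n\leq 1$. This contradicts $n\geq 2$, so at least one of $\sA,\sB$ must be degenerate, which is exactly the assertion.

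I expect no real obstacle here once Theorem \ref{thmDimDepth}(\ref{itDim}) is available: the whole argument is the interplay between the exact additivity (minus one) of the dimension under gluing and the a priori ceiling $n$ on the rank of any family of vectors in $\N^n$. The only point needing a line of care is the identity $\rk C=\rk{A|B}$, which relies on $k_1,k_2>0$. Note in particular that I would not invoke the ``$C$ non-degenerate without loss of generality'' normalization for this result, since the bound $\rk{A|B}\leq n$ holds regardless; this is also the reason the Cohen--Macaulay hypothesis of \cite[Thm. 2]{sforum20} can be dropped, as Theorem \ref{thmDimDepth}(\ref{itDim}) is now proved unconditionally.
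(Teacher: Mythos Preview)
Your argument is correct and follows essentially the same route as the paper: apply the dimension formula from Theorem \ref{thmDimDepth}(\ref{itDim}) and combine it with the trivial bound $\dim\rC=\rk{C}\leq n$ to force $2n\leq n+1$ when both semigroups are nondegenerate. The paper phrases the bound as $\dim\rC+1\leq n+1$ rather than via $\rk{A|B}$, but the content is identical.
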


\begin{proof}
If $\sC$ is a gluing of $\sA$ and $\sB$, by Theorem \ref{thmDimDepth} (\ref{itDim}), 
$\dim\rA+\dim\rB=\dim\rC+1\leq n+1$.  Hence if $\sA$ and $\sB$ are nondegenerate, 
then we get $2n\le n+1$ which is possible if and only if $n=1$. So if $n\geq 2$, we cannot glue $\sA$ and $\sB$ unless one of them is degenerate. 
\end{proof}

As a consequence of Theorem \ref{thmDimDepth}, one also gets a characterization of the Cohen-Macaulay, Gorenstein and complete intersection properties for a gluing:

\begin{theorem}\label{thmCM}
If $C = k_1A\Join k_2B$, then:
\begin{enumerate}
\item\label{itCM}
$\rC$ is Cohen-Macaulay if and only if $\rA$ and $\rB$ are.
\item\label{itGor}
$\rC$ is Gorenstein if and only if $\rA$ and $\rB$ are.
\item\label{itCI}
$\rC$ is a complete intersection if and only if $\rA$ and $\rB$ are.
\end{enumerate}
\end{theorem}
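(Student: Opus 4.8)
The plan is to derive all three equivalences from the numerical data already assembled, namely the dimension and depth formulas of Theorem \ref{thmDimDepth} and the Betti number formulas of Theorem \ref{thmJPAA}, reducing each property to an additive (or multiplicative) identity whose individual terms are nonnegative (resp. at least $1$), so that the identity forces a simultaneous condition on both parts.

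For (\ref{itCM}), I would simply write
$$\dim\rC-\depth\rC=(\dim\rA-\depth\rA)+(\dim\rB-\depth\rB),$$
which is immediate from Theorem \ref{thmDimDepth} (\ref{itDim}) and (\ref{itDepth}). Since $\depth\le\dim$ for every ring, both summands on the right are nonnegative, so the left-hand side vanishes if and only if both summands do; that is exactly the statement that $\rC$ is Cohen-Macaulay if and only if $\rA$ and $\rB$ both are.

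For (\ref{itCI}), I would compare number of generators with codimension. The codimension of $\rC$ is $\dim R-\dim\rC=(p+q)-(\dim\rA+\dim\rB-1)=\operatorname{codim}\rA+\operatorname{codim}\rB+1$ by Theorem \ref{thmDimDepth} (\ref{itDim}). On the other hand, specializing the Betti number formula of Theorem \ref{thmJPAA} (3) to $i=1$ gives $\beta_1(\rC)=\beta_1(\rA)+\beta_1(\rB)+1$, i.e. the minimal number of generators of $I_C$ is $\mu(I_A)+\mu(I_B)+1$. Since $\mu(I)\ge\operatorname{codim}$ for any ideal $I$ in a polynomial ring by Krull's height theorem, the differences $\beta_1(\rA)-\operatorname{codim}\rA$ and $\beta_1(\rB)-\operatorname{codim}\rB$ are nonnegative, and $\rC$ is a complete intersection (that is, $\beta_1(\rC)=\operatorname{codim}\rC$) precisely when both differences vanish, i.e. when $\rA$ and $\rB$ are both complete intersections.

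For (\ref{itGor}), which I expect to require the most care, I would first reduce to the Cohen-Macaulay case: a Gorenstein ring is Cohen-Macaulay, so in either direction the hypothesis together with (\ref{itCM}) makes all three rings Cohen-Macaulay, and Gorenstein-ness is then equivalent to the Cohen-Macaulay type (the last nonzero Betti number) being $1$. Writing $a=\pd\rA$ and $b=\pd\rB$, so that $\pd\rC=a+b+1$ by Theorem \ref{thmJPAA} (\ref{itPD}), I would evaluate the Betti formula at $i=a+b+1$: the term $\beta_{a+b+1-i'}(\rB)$ vanishes for all $0\le i'\le a$ (it would require $i'\ge a+1$), while $\beta_{a+b-i'}(\rB)$ survives only for $i'=a$, yielding the multiplicative identity
$$\beta_{a+b+1}(\rC)=\beta_a(\rA)\,\beta_b(\rB).$$
Thus the type of $\rC$ is the product of the types of $\rA$ and $\rB$, and since each is a positive integer, it equals $1$ if and only if both factors equal $1$; combined with the reduction above, this gives that $\rC$ is Gorenstein if and only if $\rA$ and $\rB$ are. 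The only delicate point is the standard bookkeeping identifying the Cohen-Macaulay type with the last Betti number in the relevant graded setting, which is legitimate once Cohen-Macaulayness has been secured via (\ref{itCM}).
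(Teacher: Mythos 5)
Your proof is correct and follows essentially the same route as the paper: part (\ref{itCM}) from the additivity of $\dim-\depth$ over the gluing, part (\ref{itCI}) by comparing $\mu(I_C)=\mu(I_A)+\mu(I_B)+1$ with the additivity of codimension via Krull's height theorem, and part (\ref{itGor}) by reducing to the Cohen--Macaulay case and showing the type of $\rC$ is the product of the types of $\rA$ and $\rB$. The only cosmetic difference is that you rederive that product formula directly (and correctly) from the Betti number formula of Theorem \ref{thmJPAA}(3) evaluated at $i=\pd\rA+\pd\rB+1$, whereas the paper simply cites \cite[Thm. 6.1 (4)]{jpaa19}.
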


\begin{proof}
We have the three inequalities
$\depth\rC\,\leq\dim\rC$, $\depth\rA\,\leq\dim\rA$ and $\depth\rB\,\leq\dim\rB$, and 
equality holds in the first one if and only if it holds in the other two by Theorem \ref{thmDimDepth} (\ref{itDepth}) and (\ref{itDim}). This proves (\ref{itCM}).

Then (\ref{itGor}) is a consequence of (\ref{itCM}) and the fact that if $\rA$, $\rB$ and $\rC$ are Cohen-Macaulay, then the Cohen-Macaulay type of  $\rC$ is the product of the types of $\rA$ and  $\rB$ by \cite[Thm. 6.1 (4)]{jpaa19} so it is equal to 1 if and only if both types are equal to 1.

(\ref{itCI}) is a consequence of Theorem \ref{thmDimDepth} (\ref{itDim}) and the fact that, by definition of gluing, the number of minimal generators of $I_C$, $\mu(I_C)$,  is $\mu(I_A)+\mu(I_B)+1$. Indeed, $\mu(I_A)\geq p-\dim{\rA}$ and $\mu(I_B)\geq q-\dim{\rB}$, and hence $\mu(I_C)\geq p+q-\dim{\rA}-\dim{\rB}+1=n-(\dim{\rA}-\dim{\rB}-1)=n-\dim{\rC}$, and equality holds if and only if $\mu(I_A)=p-\dim{\rA}$ and $\mu(I_B)= q-\dim{\rB}$.
\end{proof}

\begin{remark}{\rm
As we mentioned in the introduction, for complete intersections a stronger result is known \cite{FMS}: Namely, 
a semigroup in $\N^n$ is a complete intersection if and only if it is a gluing on two complete intersections of smaller embedding dimension.
This stronger result is no longer valid for Gorenstein or Cohen-Macaulay semigroups since there exist, even in the case of numerical semigroups,
examples of Gorenstein semigroups that are not a gluing of semigroups of smaller embedding dimension.
For example, by \cite[Prop. 5.2]{jpaa19}, one has that a numerical semigroup $\sC$ minimally generated by an arithmetic sequence of length $n\geq 3$, $C=\{a,a+d,\ldots,a+(n-1)d\}$, is a gluing if and only if $n=3$, $a$ is even, and $d$ is odd and relatively prime to $a/2$. Note that these are also the only numerical semigroup minimally generated by an arithmetic sequence that are a complete intersection.
In fact, a numerical semigroup minimally generated by an arithmetic sequence of length $n\geq 4$ is never a gluing of two smaller numerical semigroups. Thus, by \cite{ja13},
for all $n\geq 4$ and $1\leq t\leq n-1$, there is a numerical semigroup of embedding dimension $n$ and Cohen-Macaulay type $t$ which is not a gluing of two smaller numerical semigroups.
}\end{remark}

Theorem \ref{thmDeg} states that if $n\geq 2$ and $\sC$ is a gluing of $\sA$ and $\sB$, then at least one of the semigroups $\sA$ and $\sB$ has to be degenerate. But one can be more precise: if $\sC$ is a nondegenerate gluing of $\sA$ and $\sB$ then, by Theorem \ref{thmDimDepth} (\ref{itDim}),
\begin{equation}\label{eq1}
\dim{\rB}=n-\dim{\rA}+1\,.
\end{equation}
In particular, if $\sA$ is nondegenerate, then $\rB$ must have dimension 1 which is the most degenerate case.  

\begin{corollary}\label{corNondeg}
Given $\sA$ and $\sB$ two semigroups in $\N^n$, if $\sA$ and $\sB$ can be glued into a nondegenerate semigroup $\sC$, then $\rk A+\rk B=n+1$. 
In particular,  if $\sA$ and $\sB$ can be glued and  one of them, say, $\sA$ is nondegenerate, then $\rk B=1$.
\end{corollary}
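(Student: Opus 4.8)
The plan is to reduce everything to Theorem \ref{thmDimDepth} (\ref{itDim}) together with the basic identity $\dim\rS=\rk S$, valid for any semigroup $\sS$ generated by a finite set $S$ (this is the dimension formula $\dim\rA=\rk A$ recalled in the introduction via \cite[Lem. 4.2]{sturm}). First I would rewrite the three dimensions appearing in Theorem \ref{thmDimDepth} (\ref{itDim}) as ranks: $\dim\rA=\rk A$, $\dim\rB=\rk B$, and $\dim\rC=\rk C$. The key observation is then that the hypothesis ``$\sC$ is nondegenerate'' means precisely $\rk C=n$, so that $\dim\rC=n$.

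With this translation in hand, the dimension formula $\dim\rC=\dim\rA+\dim\rB-1$ becomes $n=\rk A+\rk B-1$, which immediately rearranges to $\rk A+\rk B=n+1$, proving the first assertion. For the ``in particular'' statement, I would simply specialize: if $\sA$ is nondegenerate, then $\rk A=n$, so substituting into $\rk A+\rk B=n+1$ forces $\rk B=1$.

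There is no real obstacle here; the corollary is a direct arithmetic consequence of Theorem \ref{thmDimDepth} (\ref{itDim}), and indeed equation (\ref{eq1}) already records the equivalent relation $\dim\rB=n-\dim\rA+1$ in terms of dimensions. The only point requiring care is the bookkeeping that identifies ``nondegenerate $\sC$'' with $\rk C=n$ and, through the dimension formula, converts the statement about Krull dimensions into one about ranks of the generating matrices.
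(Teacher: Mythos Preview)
Your proposal is correct and matches the paper's approach exactly: the paper derives equation (\ref{eq1}) from Theorem \ref{thmDimDepth} (\ref{itDim}) together with $\dim\rS=\rk S$ and the hypothesis $\dim\rC=n$, and then states the corollary as an immediate consequence without a separate proof environment. The arithmetic you describe is precisely the content of the discussion preceding the corollary.
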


If $\dim{\rA}$ or $\dim{\rB}=1$, the semigroups $\sA$ and $\sB$ that can be glued are completely characterized by \cite[Thm. 3 and 5]{sforum20}.
If $n=2$, this is the only case and \cite[Thm. 6]{sforum20} can be improved using Theorems \ref{thmDeg} and \ref{thmCM} to obtain
a complete characterization of the semigroups in $\N^2$ that can be glued:

\begin{theorem}\label{thmDim2}
Let $A=\{\a_1,\ldots,\a_p\}$ and $B=\{\b_1,\ldots,\b_q\}$ be two finite subsets of $\N^2$. 
Then, $\sA$ and $\sB$ can be glued \iff{}
one of the two subsets, for example $B$, satisfies one of the following conditions:
\begin{itemize}
\item
either $B$ has one single element $\b$, i.e., $q=1$,
\item
or $q\geq 2$ and $B = \b \begin{bmatrix}u_1 & \ldots & u_q\end{bmatrix}$ for
some $u_1,\ldots,u_q\in\N$,
\end{itemize}
and,  moreover, the system $A\cdot x=d\b$ has a solution in $\N^2$ for some $d\in\N$.
\end{theorem}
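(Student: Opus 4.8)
The plan is to reduce the statement to the already-understood rank-one situation and then invoke the characterization of gluing when one factor has dimension one. First I would record the translation of the bullet conditions: in $\N^2$ a finite set has rank one exactly when all its elements lie on a common ray through the origin, i.e.\ when $q=1$, or $q\ge2$ and $B=\b\begin{bmatrix}u_1&\ldots&u_q\end{bmatrix}$ for a primitive $\b\in\N^2$ and positive integers $u_1,\ldots,u_q$. Thus the two displayed conditions on $B$ are precisely equivalent to $\rk B=1$, equivalently $\dim\rB=1$. This is the form in which the hypothesis will be used.

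For the direct implication I would argue as follows. Suppose $\sA$ and $\sB$ can be glued, so $C=k_1A\Join k_2B$ for some $k_1,k_2\ge1$. Since $n=2\ge2$, Theorem \ref{thmDeg} forces at least one of $\sA,\sB$ to be degenerate; relabelling if necessary, I take $B$ to be the degenerate one, whence $\rk B=1$ and $B$ has the asserted form. It then remains to produce $d$ with $d\b\in\sA$, and here I use the gluing binomial $\rho=\xx^\c-\yy^\d$. Since $\rho\in I_C$, the monomials $\xx^\c$ and $\yy^\d$ have the same image, which at the level of the semigroup reads $k_1\sum_j c_j\a_j=k_2\sum_i d_i\b_i$ in $\N^2$; because every $\b_i$ is a multiple of $\b$, the right-hand side equals $d\b$ with $d=k_2\sum_i d_iu_i$. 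As $I_C$ is a prime toric ideal it contains no monomial, so $\c\ne0\ne\d$ and hence $d\ge1$; setting $x=k_1\c\in\N^p$ gives $Ax=d\b$, i.e.\ $d\b\in\sA$, as required.

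For the converse, assume $B$ has the stated form, so $\dim\rB=1$, and that $d\b\in\sA$ for some $d\ge1$. Since $\dim\rB=1$, the pair $(\sA,\sB)$ falls under the complete characterization of gluing with a one-dimensional factor in \cite[Thm.~3]{sforum20} (the case $q=1$) and \cite[Thm.~5]{sforum20} (the case $q\ge2$), whose hypothesis is exactly the solvability of $Ax=d\b$ in $\N^p$ for some $d$; hence $\sA$ and $\sB$ can be glued. Conceptually the gluing is built by choosing $k_1,k_2$ so that the overlap $\Z(k_1A)\cap\Z(k_2B)$ — which is automatically a rank-one subgroup of $\Z\b$ as soon as $d\b\in\Z A$ — is generated by an element lying in \emph{both} semigroups $\langle k_1A\rangle$ and $\langle k_2B\rangle$; that generator then furnishes the gluing binomial.

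The step I expect to be the genuine obstacle is precisely this last one: arranging that the generator $\delta$ of $\Z(k_1A)\cap\Z(k_2B)$ belongs to the two semigroups and not merely to the two groups. The group-level intersection is one-dimensional for free, but the semigroup membership requires choosing $k_1$ and $k_2$ large enough and with suitable coprimality, and it is exactly this construction that \cite{sforum20} carries out, so I would cite it rather than redo it. Finally I would remark that the present statement sharpens \cite[Thm.~6]{sforum20}: the earlier version invoked the degeneracy obstruction only under a Cohen--Macaulay assumption on $\rA$ and $\rB$, whereas Theorem \ref{thmDeg} (together with Theorem \ref{thmCM}) now holds without that hypothesis, so it may be dropped here.
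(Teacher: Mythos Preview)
Your proposal is correct and follows essentially the same approach as the paper, which does not give a formal proof but indicates in the paragraph preceding the statement that the result follows by combining Theorem~\ref{thmDeg} (now free of the Cohen--Macaulay hypothesis) with the rank-one characterization of \cite[Thms.~3 and~5]{sforum20}. Your write-up supplies the details the paper leaves implicit, including the extraction of the condition $d\b\in\sA$ from the gluing binomial, and your closing remark on how this sharpens \cite[Thm.~6]{sforum20} matches the paper's own framing.
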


Assuming now that $n\geq 3$ and that $\sA$ and $\sB$ are both degenerate, we wonder when $\sA$ and $\sB$ can be glued into a nondegenerate semigroup $\sC$. 
One can assume without loss of generality that $\dim{\rA}\geq\dim{\rB}$ (if not, one can switch them).
By (\ref{eq1}), if $2\leq \dim{\rA}\leq n-1$, then $2\leq \dim{\rB}\leq n-1$:
\begin{itemize}
\item
If $n=3$, one could have $\dim{\rA}=\dim{\rB}=2$,
\item
if $n=4$, one could have  $\dim{\rA}=3$ and $\dim{\rB}=2$,
\item
if $n=5$, one could have $\dim{\rA}=\dim{\rB}=3$ or $\dim{\rA}=4$ and $\dim{\rB}=2$,
and so on. 
\end{itemize}

\smallskip
When $n=3$, $\sA$ and $\sB$ could be both degenerate of dimension 2 and glued into a nondegenerate semigroup $\sC$ as \cite[Ex. 7]{jpaa19} shows.
In this example, we glue the twisted cubic with itself in $\N^3$. Let's recall this in Example \ref{exTwistedYes} and then show in Example \ref{exTwistedNo} 
that two degenerate semigroups in $\N^3$
whose defining ideal is the defining ideal of the twisted cubic could also not be glued.
We will later study in Section \ref{secHowToDim2} how to glue in $\N^3$ two nondegenerate homogeneous semigroups in $\N^2$ setting them as degenerate semigroups in $\N^3$ just as we do in the next example.

\begin{example}\label{exTwistedYes}{\rm
As shown in \cite[Ex. 7]{jpaa19} and \cite[Ex. 1]{sforum20}, if one considers the two degenerate semigroups $\sA$ and $\sB$ in $\N^3$ generated by the columns of the integer
matrices $A=\begin{pmatrix} 4&3&2&1\\ 0&1&2&3\\ 0&0&0&0\end{pmatrix}$ and $B=\begin{pmatrix} 3&3&3&3\\ 3&2&1&0\\ 0&1&2&3\end{pmatrix}$,
their defining ideal 
$I_A\subset k[x_1,\ldots,x_4]$ and $I_B\subset k[y_1,\ldots,y_4]$ 
are both the defining ideal of the twisted cubic. In other words, $\rA\simeq \rB\simeq \rS$  where 
$\sS$ is the semigroup in $\N^2$ generated by $S=\{(3,0), (2,1), (1,2), (0,3)\}$.
One can check that for $C=A\cup B$, one has that $I_C=I_A+I_B+\langle y_1^2-x_1x_4^2\rangle$ and hence, $C =A\Join B$.
}\end{example}

\begin{example}\label{exTwistedNo}{\rm
Now if one substitutes 
$B$ by $B'=\begin{pmatrix} 2&2&2&2\\ 3&2&1&0\\ 1&2&3&4\end{pmatrix}$
in Example \ref{exTwistedYes}, one also has that
$I_{B'}$ is the defining ideal of the twisted cubic but for $C'=A\cup B'$, $\langle C'\rangle$ is not a gluing of $\sA$ and $\langle B'\rangle$ since 
$I_{C'}=I_A+I_B+\langle y_1^2y_2-x_3x_4^2y_4, y_1^3-x_3x_4^2y_3\rangle$.
Indeed, $\sA$ and $\langle B'\rangle$ can not be glued since $k_1\sA\cap k_2\langle B'\rangle=\{(0,0,0)\}$ for any $k_1,k_2\in\N$ because
the third coordinate of any element in $k_1\sA$ is 0, and the only element in $k_2\langle B'\rangle$ with this property is $(0,0,0)$.
We will see in Proposition \ref{propNecCond} that this implies that $\sA$ and $\sB$ can not be glued.
}\end{example}

\section{When can two semigroups in $\N^n$ be glued?}\label{secWhenGluing}

Consider two finite subsets $A=\{\a_1,\ldots,\a_p\}$ and $B=\{\b_1,\ldots,\b_q\}$ in $\N^n$.
As already observed in Corollary \ref{corNondeg}, if $\sA$ and $\sB$ can be glued into a nondegenerate semigroup $\sC$
then $\rk A+\rk B=n+1$ so this condition is necessary but 
it is not sufficient as Example \ref{exTwistedNo} shows.
So along this section we will only consider semigroups $\sA$ and $\sB$ satisfying the following rank conditions:
\begin{equation}\label{eqRankCond}
\rk{A\vert B}=n \quad\hbox{and}\quad \rk A+\rk B=n+1\,.
\end{equation}

\subsection{The gluable lattice point and the level of a binomial}

Consider the $\Q$-vector spaces generated by $A$ and $B$, $v(A)$ and $v(B)$ respectively. The rank conditions (\ref{eqRankCond}) 
imply that $\dim_\Q v(A)\cap v(B)=1$. Thus, there exists a unique (up to sign) element $\u=(u_1,\ldots,u_n)\in\Z^n$ 
with $\gcd(u_1,\ldots,u_n)=1$ that generates the $\Q$-vector space $v(A)\cap v(B)$.  

\begin{definition}{\rm
Given two semigroups $\sA$ and $\sB$ in $\N^n$ satisfying the rank conditions (\ref{eqRankCond}), the unique element $\u=(u_1,\ldots,u_n)\in\Z^n$
with $\gcd(u_1,\ldots,u_n)=1$ that generates the $\Q$-vector space $v(A)\cap v(B)$ and whose first nonzero entry is positive
is called the {\it gluable lattice point} of $A$ and $B$ and denoted by $\u(A,B)$.
}\end{definition}

The gluable lattice point $\u(A,B)$ can be computed from $A$ and $B$ as follows.  Set $s=\rk{A}$ and $t=\rk{B}$. Then, by the rank conditions  (\ref{eqRankCond}),
$s+t=n+1$. After a renumbering of the columns in $A$ and $B$, if necessary, if $\{\a_1,\ldots,\a_s\}$ and $\{\b_1,\ldots,\b_t\}$  are both $\Q$-linearly independent, 
let $\d=(d_1,\ldots,d_{s+t})$ be the vector formed by the $(n+1)$ signed $n\times n$ minors of the $n\times (n+1)$ matrix  $(\a_1\cdots\a_s \b_1\cdots \b_t)$ such that 
$(\a_1\cdots\a_s \b_1\cdots \b_t)\times \d^t$ is zero. 
We can divide the $d_i$'s by any common factors to get them to be relatively prime. 
Then, 
$$
\begin{pmatrix} \a_1& \cdots& \a_s\end{pmatrix}\times \begin{pmatrix} d_1\\ \vdots\\ d_s\end{pmatrix}=-
\begin{pmatrix} \b_1& \cdots& \b_t\end{pmatrix}\times \begin{pmatrix} d_{s+1}\\ \vdots\\ d_{s+t}\end{pmatrix}
$$
is an element of the $\Q$-vector space $v(A)\cap v(B)$, and hence a multiple of $\u(A,B)$. Factoring out the common factors if any, one gets $\u(A,B)$.
Note that, although $\d$ depends on the choice of the $\Q$-basis for the column spaces of $A$ and $B$, we see that $\u(A,B)$ is independent of that choice.

\begin{example}\label{exTwistedLP}{\rm
In Example \ref{exTwistedYes}, the $3\times 4$ integer matrix formed by the first two columns of $A$ (that are
linearly independent) and the first two columns of $B$ (that are also linearly independent) is $\begin{pmatrix}  4&3&3&3\\ 0&1&3&2\\ 0&0&0&1\end{pmatrix}$
and, one gets that $\d=2(3,-6,2,0)$. Thus, $3\a_1-6\a_2=-2\b_1=(-6,-6,0)$ and hence the gluable lattice point of $A$ and $B$ is $\u(A,B)=(1,1,0)$.
In Example  \ref{exTwistedNo}, one gets that $\u(A,B')=(1,2,0)$.
}\end{example}

We now prove a result that will be useful in the proof of Theorem \ref{thmHowToGlue} and that leads to the definition of level of a binomial in Definition \ref{defLevel}.

\begin{lemma}\label{lemCharBinomial}
Given $A=\{\a_1,\ldots,\a_p\}$ and $B=\{\b_1,\ldots,\b_q\}$ in $\N^n$ satisfying the rank conditions (\ref{eqRankCond}), 
let $\u=\u(A,B)\in\Z^n$ be the gluable lattice point of $A$ and $B$.
Take two relatively prime integers $k_1$ and $k_2$, and set $C=k_1A\cup k_2B$.
Then, a binomial $w\in R$ of the form
$w = \prod_{j\in \alpha_+}x_j^{\alpha_j} \prod_{j\in \beta_-} y_j^{-\beta_j}-
           \prod_{j\in \alpha_-}x_j^{-\alpha_j}\prod_{j\in \beta_+}y_j^{\beta_j}$
with $\alpha\in\Z^p$ and $\beta\in\Z^q$ belongs to $I_C$ \iff{} there exists an integer $\ell\in\Z$ such that
$$
B\cdot \beta=k_1 \ell\u
\quad\hbox{and}\quad 
A\cdot \alpha=k_2 \ell\u\,.
$$
\end{lemma}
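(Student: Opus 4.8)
The plan is to translate the membership of $w$ in $I_C$ into a single linear equation over $\Z^n$, and then extract the two claimed conditions using that $v(A)\cap v(B)$ is the rational line spanned by the primitive vector $\u$. Recall that $I_C$ is the kernel of the $k$-algebra homomorphism $\phi_C\colon R\to k[t_1,\ldots,t_n]$ determined by $x_j\mapsto\t^{k_1\a_j}$ and $y_j\mapsto\t^{k_2\b_j}$. Applying $\phi_C$ to the two monomials of $w$, which have disjoint support, and using that a difference of two monomials lies in $\ker\phi_C$ precisely when they have the same image, I would first establish
$$
w\in I_C \Longleftrightarrow k_1\,(A\cdot\alpha)=k_2\,(B\cdot\beta)\,,
$$
where $A\cdot\alpha=\sum_j\alpha_j\a_j$ and $B\cdot\beta=\sum_j\beta_j\b_j$. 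The only care needed is the bookkeeping of signs and supports: the exponents $-\beta_j$ for $j\in\beta_-$ and $-\alpha_j$ for $j\in\alpha_-$ recombine, after moving every term to one side, into the full integer sums $A\cdot\alpha$ and $B\cdot\beta$ (using that $\alpha_j=0$ outside $\alpha_+\cup\alpha_-$ and similarly for $\beta$). I expect this reduction to be routine, but since it is where all the combinatorial definitions get unwound, it is the step I would write out most carefully, and I regard it as the main obstacle.

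Next I would exploit the rank hypotheses (\ref{eqRankCond}). The common vector $k_1(A\cdot\alpha)=k_2(B\cdot\beta)$ lies in $v(A)$, being an integer combination of the $\a_j$, and simultaneously in $v(B)$, hence in $v(A)\cap v(B)=\Q\u$. Thus there is a rational scalar $m$ with $k_1(A\cdot\alpha)=k_2(B\cdot\beta)=m\u$. Since $A\cdot\alpha\in\Z^n$, the vector $m\u$ is integral; choosing integers $c_i$ with $\sum_i c_iu_i=1$ (possible because $\gcd(u_1,\ldots,u_n)=1$) and writing $m=\sum_i c_i(m u_i)$ shows $m\in\Z$.

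The heart of the argument, and the only place the hypothesis $\gcd(k_1,k_2)=1$ enters, is the divisibility step. From $A\cdot\alpha=\tfrac{m}{k_1}\u\in\Z^n$ together with the primitivity of $\u$ (same argument as above applied to $\tfrac{m}{k_1}\u$) I would deduce $k_1\mid m$; symmetrically, $B\cdot\beta=\tfrac{m}{k_2}\u\in\Z^n$ gives $k_2\mid m$. As $k_1$ and $k_2$ are coprime, $k_1k_2\mid m$, so $\ell:=m/(k_1k_2)$ is an integer, and then $A\cdot\alpha=k_2\ell\u$ and $B\cdot\beta=k_1\ell\u$, which is exactly the forward implication. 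The converse is immediate: if such an $\ell$ exists, then $k_1(A\cdot\alpha)=k_1k_2\ell\u=k_2(B\cdot\beta)$, so $w\in I_C$ by the displayed equivalence. Everything after the first reduction is clean linear algebra and an elementary coprimality argument, so the difficulty is concentrated entirely in getting that initial translation right.
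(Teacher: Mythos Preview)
Your proposal is correct and follows essentially the same approach as the paper: translate $w\in I_C$ into $k_1(A\cdot\alpha)=k_2(B\cdot\beta)$, observe that this common vector lies in $v(A)\cap v(B)=\Q\u$ and hence equals $m\u$, then use primitivity of $\u$ and coprimality of $k_1,k_2$ to deduce $k_1k_2\mid m$. Your write-up is in fact more explicit than the paper's about the sign bookkeeping and the B\'ezout-type argument showing $m\in\Z$, but the underlying ideas are identical.
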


\begin{proof}
If $w\in I_C$, then $k_2 B\cdot\beta=k_1 A\cdot\alpha$ and since this is a vector in $v(A)\cap v(B)$, there exists $c\in\Q$ such that
 $k_2 B\cdot\beta=k_1 A\cdot\alpha=c\u$. As $B\cdot\beta$ and $A\cdot\alpha$ belong to $\Z^n$, and $\gcd(u_1,\ldots,u_n)=1$, one has that $c\in\Z$
and $k_1$ and $k_2$ divide $c$. Since we have assumed $k_1$ and $k_2$ to be relatively prime, one gets that $k_1k_2$ divides $c$.
Then, there exists $\ell\in\Z$ such that $k_2 B\cdot\beta=k_1 A\cdot\alpha=k_1k_2\ell\u$ and the result follows.

\smallskip
Conversely, if $B\cdot \beta=k_1 \ell\u$ and $A\cdot \alpha=k_2 \ell\u$ for some $\ell\in\Z$, then $k_1A\cdot \alpha=k_2B\cdot \beta$ and hence
$w\in I_C$.
\end{proof}

\begin{definition}\label{defLevel}{\rm
Given a binomial $w$ as in Lemma \ref{lemCharBinomial} that belongs to $I_C$, the integer $\ell\in\N$ is called the level of $w$
and denoted by $\ell(w)$.
}\end{definition}

\begin{example}\label{exTwistedLevel}{\rm
In Example \ref{exTwistedYes} ($k_1=k_2=1$), if one considers the binomial $w=y_1^2-x_1x_4^2$, then $\alpha=(1,0,0,2)$ and $\beta=(2,0,0,0)$,
and $A\cdot \alpha=B\cdot \beta=6\u(A,B)$ so $w$ has level $\ell(w)=6$.
In Example  \ref{exTwistedNo} ($k_1=k_2=1$), for $w_1=y_1^2y_2-x_3x_4^2y_4$ and $w_2=y_1^3-x_3x_4^2y_3$, one has that $\alpha_1=\alpha_2=(0,0,1,2)$, $\beta_1=(1,2,0,-1)$
and $\beta_2=(3,0,-1,0)$. Thus, $A\cdot \alpha_1=B'\cdot \beta_1= 2\u(A,B')$ and $A\cdot \alpha_2=B'\cdot \beta_2= 2\u(A,B')$ and hence
both binomials $w_1$ and $w_2$ have level 2.
}\end{example}

\subsection{Necessary conditions for $\sA$ and $\sB$ to be glued}\label{subsec2ndNecCond}

As already observed, the rank conditions $\rk{A\vert B}=n$ and $\rk A+\rk B=n+1$ are necessary to have that $\sA$ and $\sB$ can be glued
(into a nondegenerate semigroup) but they are not sufficient as Example \ref{exTwistedNo} shows.

\smallskip

It is also clear that if for $C=k_1 A\cup k_2 B$ for some integers $k_1,k_2$, we want to have in $I_C$ a binomial of the
form $\rho=\underbar{x}^\c-\underbar{y}^\d$ with $\c\in\N^p$ and $\d\in\N^q$, then $k_1A\cdot \c=k_2 B\cdot \d$.
This implies that $A\cdot k_1\c=B\cdot k_2 \d$ and hence, the system 
$A\cdot X=B\cdot Y$ must have nontrivial solution $(X,Y)$ such that $X\in\N^p$ and $Y\in\N^q$.
The condition
\begin{center}
the system $A\cdot X=B\cdot Y$ has a nontrivial solution $(X,Y)$ with $X\in\N^p$ and $Y\in\N^q$
\end{center}
is thus also necessary.

One can easily show that this condition is equivalent to the fact that both semigroups $\sA$ and $\sB$ contain a multiple of the gluable lattice point of $A$ and $B$:

\begin{lemma}\label{lemEquivSecCond}
Given $A=\{\a_1,\ldots,\a_p\}$ and $B=\{\b_1,\ldots,\b_q\}$ in $\N^n$ satisfying the rank conditions
$\rk{A\vert B}=n$ and $\rk A+\rk B=n+1$,
let $\u=\u(A,B)\in\Z^n$ be the gluable lattice point of $A$ and $B$.
Then, the following are equivalent:
\begin{enumerate}
\item\label{condSyst}
the system $A\cdot X=B\cdot Y$ has a nontrivial solution $(X,Y)$ such that $X\in\N^p$ and $Y\in\N^q$;
\item\label{condk1k2}
there exist positive integers $a$ and $b$ such that $a\u\in\sA$ and $b\u\in\sB$.
\end{enumerate}
\end{lemma}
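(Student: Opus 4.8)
The plan is to prove the two implications directly, using that, by definition, $\sA=\{A\cdot X\,:\,X\in\N^p\}$ and $\sB=\{B\cdot Y\,:\,Y\in\N^q\}$, and that the rank conditions force $v(A)\cap v(B)=\Q\u$. Throughout I would use that the generators $\a_j$ and $\b_i$ are nonzero vectors in $\N^n$, which holds because they minimally generate $\sA$ and $\sB$.

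For (\ref{condSyst})$\Rightarrow$(\ref{condk1k2}): suppose $(X,Y)$ is a nontrivial solution of $A\cdot X=B\cdot Y$ with $X\in\N^p$ and $Y\in\N^q$. First I would observe that such a solution must have $X\neq 0$ and $Y\neq 0$: if $X=0$ then $B\cdot Y=0$, and since $B\cdot Y$ is a nonnegative combination of the nonzero vectors $\b_i$, this forces $Y=0$, contradicting nontriviality (and symmetrically for $Y=0$). Hence the common value $\gamma=A\cdot X=B\cdot Y$ is a nonzero vector of $\N^n$. Since $\gamma\in v(A)\cap v(B)=\Q\u$, one can write $\gamma=c\u$ for some $c\in\Q$. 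As $\gamma\in\Z^n$ and $\gcd(u_1,\ldots,u_n)=1$, one gets $c\in\Z$; and since $\gamma$ is a nonzero vector with nonnegative entries while the first nonzero entry of $\u$ is positive, $c$ must be a \emph{positive} integer. Taking $a=b=c$ then gives $a\u=A\cdot X\in\sA$ and $b\u=B\cdot Y\in\sB$, which is (\ref{condk1k2}).

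For (\ref{condk1k2})$\Rightarrow$(\ref{condSyst}): given positive integers $a,b$ with $a\u\in\sA$ and $b\u\in\sB$, write $a\u=A\cdot X$ and $b\u=B\cdot Y$ for some $X\in\N^p$ and $Y\in\N^q$. Multiplying the first equality by $b$ and the second by $a$ yields $A\cdot(bX)=ab\,\u=B\cdot(aY)$, so $(bX,aY)$ is a solution of $A\cdot X=B\cdot Y$ in $\N^p\times\N^q$, and it is nontrivial because $a\u\neq 0$ forces $X\neq 0$.

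The only delicate point, and the step I expect to require the most care, is pinning down in the first implication that the common value is a positive integer multiple of $\u$ rather than merely a rational one: this is precisely where both the primitivity $\gcd(u_1,\ldots,u_n)=1$ and the sign normalization built into the definition of the gluable lattice point $\u(A,B)$ get used. The rest is bookkeeping with nonnegative integer combinations.
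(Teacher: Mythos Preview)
Your proof is correct and follows essentially the same approach as the paper's own proof: both directions are handled by observing that a common value $A\cdot X=B\cdot Y$ lies in $v(A)\cap v(B)=\Q\u$, and conversely that $ab\,\u$ lies in $\sA\cap\sB$. You are in fact more careful than the paper in justifying that the scalar $c$ is a \emph{positive integer} (using primitivity and the sign convention on $\u$) and that a nontrivial solution forces $X\neq 0$ and $Y\neq 0$; the paper leaves these points implicit.
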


\begin{proof}
If (\ref{condSyst}) holds, then
$x_1\a_1+\cdots+x_p\a_p=y_1\b_1+\ldots+y_q\b_q$ belongs to $\sA$ and $\sB$. In particular, it is an element in $v(A)\cap v(B)$ and hence a
multiple of $\u$ and (\ref{condk1k2}) follows.

\smallskip
Conversely, if  (\ref{condk1k2})  holds, then $ab\u$ belongs to $\sA$ and $\sB$, i.e.,
$ab\u=x_1\a_1+\cdots+x_p\a_p=y_1\b_1+\ldots+y_q\b_q$ for some $x_i,y_j\in\N$ and (\ref{condSyst}) follows.
\end{proof}

\begin{remark}{\rm
Note that if the rank conditions (\ref{eqRankCond}) are satisfied, the equivalent conditions in Lemma \ref{lemEquivSecCond} force the gluable lattice point $\u(A,B)$ to have all
its entries nonnegative, i.e., $\u(A,B)\in\N^n$.
}\end{remark}

We summarize the necessary conditions in the following result:

\begin{proposition}\label{propNecCond}
If two semigroups $\sA$ and $\sB$ in $\N^n$ can be glued into a nondegenerate semigroup $\sC$, i.e., $C=C = k_1A\Join k_2B$ for some relatively prime integers $k_1$ and $k_2$,
then:
\begin{enumerate}
\item
$\rk{A\vert B}=n$ and $\rk A+\rk B=n+1$, and
\item
both semigroups $\sA$ and $\sB$ contain a nonzero multiple of $\u(A,B)$, the gluable lattice point of $A$ and $B$.
\end{enumerate}
\end{proposition}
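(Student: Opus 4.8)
The plan is to assemble this proposition from the machinery already built in the section, since each of the two assertions has essentially been prepared. Part~(1) follows almost immediately. The hypothesis that $\sC$ is nondegenerate means precisely that $\dim\rC=\rk C=n$; scaling columns by the positive integers $k_1,k_2$ does not change the $\Q$-span, so $\rk C=\rk{A\vert B}$, giving the first equality $\rk{A\vert B}=n$. For the second equality, $\rk A+\rk B=n+1$, I would simply invoke Corollary~\ref{corNondeg}, whose statement is exactly this under the nondegenerate gluing hypothesis. (Equivalently, one combines the dimension formula $\dim\rC=\dim\rA+\dim\rB-1$ of Theorem~\ref{thmDimDepth}(\ref{itDim}) with $\dim\rC=n$.)

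For part~(2), the key object is the gluing binomial itself. By Definition~\ref{gluing}, the gluing furnishes a binomial $\rho=\xx^\c-\yy^\d\in I_C$ with $\c\in\N^p$ and $\d\in\N^q$. Reading off the semigroup relation encoded by $\rho\in I_C$ gives $k_1 A\cdot\c=k_2 B\cdot\d$, hence $A\cdot(k_1\c)=B\cdot(k_2\d)$. Setting $X=k_1\c\in\N^p$ and $Y=k_2\d\in\N^q$, this is a solution of the system $A\cdot X=B\cdot Y$ appearing in Lemma~\ref{lemEquivSecCond}(\ref{condSyst}). Once I verify it is nontrivial, Lemma~\ref{lemEquivSecCond} immediately yields condition~(\ref{condk1k2}), namely that $\sA$ and $\sB$ each contain a positive multiple of $\u(A,B)$, which is exactly part~(2).

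The one point requiring a genuine (if small) argument, and the only real obstacle, is checking that $(X,Y)$ is nontrivial, equivalently that $\c$ and $\d$ are both nonzero; this is what guarantees the multiple of $\u(A,B)$ produced is \emph{nonzero}. I would argue from the primeness of $I_C$ together with the fact that the columns of $A$ and $B$ are nonzero, as they minimally generate. If, say, $\c=0$, then $\rho=1-\yy^\d$, and membership in the prime ideal $I_C$ forces $k_2 B\cdot\d=0$; since $B$ has nonnegative entries and no zero column while $\d\in\N^q$, this gives $\d=0$ and $\rho=0$, contradicting that $\rho$ is a genuine gluing binomial (indeed $I_C\neq I_A\cdot R+I_B\cdot R$, by the strict drop in dimension in Theorem~\ref{thmDimDepth}(\ref{itDim})). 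Symmetrically $\d\neq 0$. With both nonzero, $A\cdot\c=B\cdot\d$ is a nonzero vector of $v(A)\cap v(B)$, so the common multiple of $\u(A,B)$ is nonzero, completing part~(2).
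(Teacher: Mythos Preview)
Your proposal is correct and follows essentially the same route as the paper, which presents this proposition as a summary of the preceding discussion: part~(1) from Corollary~\ref{corNondeg}, and part~(2) from the observation that a gluing binomial $\rho=\xx^\c-\yy^\d\in I_C$ yields $A\cdot(k_1\c)=B\cdot(k_2\d)$, then invoking Lemma~\ref{lemEquivSecCond}. Your explicit justification that $(\c,\d)\neq(0,0)$, which the paper leaves implicit, is a welcome addition and the argument you give for it is sound.
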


\subsection{A sufficient condition}

The two necessary conditions in Proposition \ref{propNecCond} are not sufficient but one has very close sufficient conditions:

\begin{theorem}\label{thmHowToGlue}
Consider $A=\{\a_1,\ldots,\a_p\}$ and $B=\{\b_1,\ldots,\b_q\}$ in $\N^n$ satisfying the rank conditions 
$\rk{A\vert B}=n$ and $\rk A+\rk B=n+1$,
and let $\u=\u(A,B)\in\Z^n$ be the gluable lattice point of $A$ and $B$.
If there exist two relatively prime integers $k_1,k_2\in\N$ such that  $k_1\u\in\sB$ and $k_2\u\in\sA$, then $C= k_1A\Join k_2B$
and hence, $\sA$ and $\sB$ can be glued.
\end{theorem}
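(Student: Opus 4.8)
The plan is to exhibit an explicit gluing binomial and then show that the ideal it generates, together with $I_A$ and $I_B$, is all of $I_C$ for $C=k_1A\cup k_2B$. Since $k_2\u\in\sA$ and $k_1\u\in\sB$, I would choose $\c\in\N^p$ and $\d\in\N^q$ with $A\cdot\c=k_2\u$ and $B\cdot\d=k_1\u$, and set $\rho=\xx^\c-\yy^\d$ (a binomial of the required shape, with $\c\in\N^p$ and $\d\in\N^q$). Then $k_1(A\cdot\c)=k_1k_2\u=k_2(B\cdot\d)$, so $\rho\in I_C$ by the converse direction of Lemma \ref{lemCharBinomial} (it has level $1$). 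Writing $J=I_A\cdot R+I_B\cdot R+\langle\rho\rangle$, the inclusion $J\subseteq I_C$ is immediate: any binomial $\xx^\mu-\xx^\nu\in I_A$ satisfies $A\mu=A\nu$, hence $k_1A\mu=k_1A\nu$ and it lies in $I_C$ (and symmetrically for $I_B$), while $\rho\in I_C$. So everything reduces to proving $I_C\subseteq J$.

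For the reverse inclusion I would use that, exactly as in the proof of Theorem \ref{thmDimDepth}, $I_C$ is generated by binomials of the form $w=\xx^{\alpha_+}\yy^{\beta_-}-\xx^{\alpha_-}\yy^{\beta_+}$, where $\alpha_\pm\in\N^p$ and $\beta_\pm\in\N^q$ are the positive and negative parts of some $\alpha\in\Z^p$ and $\beta\in\Z^q$; it therefore suffices to show each such $w$ lies in $J$. By Lemma \ref{lemCharBinomial} there is a level $\ell\in\Z$ with $A\cdot\alpha=k_2\ell\u$ and $B\cdot\beta=k_1\ell\u$, and after replacing $w$ by $-w$ if necessary (which does not affect membership in $J$) I may assume $\ell\geq0$, so that the $\ell$-th powers of $\xx^\c$ and $\yy^\d$ make sense.

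The heart of the argument is then a short reduction. From $A\cdot\alpha=k_2\ell\u=A\cdot(\ell\c)$ one gets $A\cdot\alpha_+=A\cdot(\alpha_-+\ell\c)$, so $\xx^{\alpha_+}-\xx^{\alpha_-}(\xx^\c)^\ell\in I_A$; symmetrically $B\cdot(\beta_-+\ell\d)=B\cdot\beta_+$ gives $\yy^{\beta_-}(\yy^\d)^\ell-\yy^{\beta_+}\in I_B$. A direct expansion then yields
$$
w=\bigl(\xx^{\alpha_+}-\xx^{\alpha_-}(\xx^\c)^\ell\bigr)\yy^{\beta_-}
+\xx^{\alpha_-}\yy^{\beta_-}\bigl((\xx^\c)^\ell-(\yy^\d)^\ell\bigr)
+\xx^{\alpha_-}\bigl(\yy^{\beta_-}(\yy^\d)^\ell-\yy^{\beta_+}\bigr),
$$
where the first summand lies in $I_A\cdot R$, the last in $I_B\cdot R$, and the middle in $\langle\rho\rangle$ because $(\xx^\c)^\ell-(\yy^\d)^\ell=\rho\sum_{i=0}^{\ell-1}(\xx^\c)^{\ell-1-i}(\yy^\d)^i$. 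Hence $w\in J$, giving $I_C=J$ and thus $C=k_1A\Join k_2B$. I expect the only delicate point to be the exponent bookkeeping: the reduction needs $\c,\d$ to be genuine \emph{nonnegative} solutions of $A\cdot\c=k_2\u$ and $B\cdot\d=k_1\u$, together with $\ell\geq0$. This is precisely where the hypotheses $k_2\u\in\sA$ and $k_1\u\in\sB$ (rather than mere membership of $\u$ in $v(A)\cap v(B)$) are used, and it is exactly the membership that fails for the semigroup $\langle B'\rangle$ of Example \ref{exTwistedNo}.
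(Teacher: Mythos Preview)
Your proof is correct and follows essentially the same approach as the paper. The paper reaches exactly your three-term decomposition, but indirectly: it first introduces the auxiliary binomial $\theta=(\xx^\c)^{\ell}\yy^{\beta_-}-\yy^{\beta_+}$ and the power $\rho^{[\ell]}=(\xx^\c)^\ell-(\yy^\d)^\ell$, shows $\theta-\rho^{[\ell]}\yy^{\beta_-}\in I_B$ and $w-\xx^{\alpha_-}\theta\in I_A\cdot R$, and then combines these; unwinding those two steps gives precisely your telescoping identity. Your explicit handling of the sign of $\ell$ (replacing $w$ by $-w$ when $\ell<0$) is a point the paper glosses over.
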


\begin{proof}
Since $k_1\u\in\sB$ and $k_2\u\in\sA$, there exist $d_1,\ldots,d_q,c_1,\ldots,c_p\in\N$ such that 
$$
k_1\u=d_1\b_1+\cdots+d_q\b_q\quad\hbox{and}\quad 
k_2\u=c_1\a_1+\cdots+c_p\a_p\,.
$$
As $k_1$ and $k_2$ are relatively prime, this implies by Lemma \ref{lemCharBinomial} that the binomial $\rho=x_1^{c_1}\cdots x_p^{c_p}-y_1^{d_1}\cdots y_q^{d_q}$ 
belongs to $I_C$ and that it has level 1, i.e., $\ell(\rho)=1$.

\smallskip
Consider now an arbitrary binomial $w\in I_C$ of the form 
$w = \prod_{j\in \alpha_+}x_j^{\alpha_j} \prod_{j\in \beta_-} y_j^{-\beta_j}-
           \prod_{j\in \alpha_-}x_j^{-\alpha_j}\prod_{j\in \beta_+}y_j^{\beta_j}$
with $\alpha\in\Z^p$ and $\beta\in\Z^q$. Then, by Lemma \ref{lemCharBinomial}, 
\begin{center}
$A\cdot \alpha=k_2 \ell(w) \u$ and $B\cdot \beta=k_1 \ell(w) \u$.
\end{center}
Set
$\displaystyle{\theta=\prod_{j=1}^{p}x_j^{\ell(w)c_j} \prod_{j\in \beta_-} y_j^{-\beta_j}-\prod_{j\in \beta_+}y_j^{\beta_j}
}$. Since $A\cdot (\ell(w)c_1,\ldots,\ell(w)c_p)^t=\ell(w)(c_1\a_1+\cdots+c_p\a_p)=\ell(w)k_2\u$ and
$B\cdot \beta=k_1 \ell(w) \u$, one has by Lemma \ref{lemCharBinomial} that $\theta$ is a binimial in $I_C$ of level
$\ell(\theta)=\ell(w)$.
On the other hand, the binonial
$\rho^{[\ell(w)]}:=\prod_{j=1}^{p}x_j^{\ell(w)c_j} -\prod_{j=1}^{q}y_j^{\ell(w)d_j} $ also belongs to $I_C$ since it is a multiple
of $\rho$. Indeed, it also has level $\ell(\rho^{[\ell(w)]})=\ell(w)$.
Now, 
$\theta-\rho^{[\ell(w)]}\cdot \prod_{j\in \beta_-} y_j^{-\beta_j}=
\prod_{j=1}^{q}y_j^{\ell(w)d_j}\prod_{j\in \beta_-} y_j^{-\beta_j}
-\prod_{j\in \beta_+}y_j^{\beta_j}$
that belongs to $I_C$ and is, again, of level $\ell(w)$. Moreover, this binomial only involves variables $y_1,\ldots,y_q$ so it is in
$I_C\cap k[y_1,\ldots,y_q]=I_B$. This shows that $\theta\in I_A\cdot R+\langle \rho \rangle$.

\smallskip
Finally, consider now
$w-\theta\cdot  \prod_{j\in \alpha_-}x_j^{-\alpha_j}$:
\begin{eqnarray*}
w-\theta\cdot  \prod_{j\in \alpha_-}x_j^{-\alpha_j}
&=&
\prod_{j\in \alpha_+}x_j^{\alpha_j} \prod_{j\in \beta_-} y_j^{-\beta_j}-
\prod_{j=1}^{p}x_j^{\ell(w)c_j} \prod_{j\in \beta_-} y_j^{-\beta_j}\prod_{j\in \alpha_-}x_j^{-\alpha_j}\\
&=&
 \prod_{j\in \beta_-} y_j^{-\beta_j}(
\prod_{j\in \alpha_+}x_j^{\alpha_j}-
\prod_{j=1}^{p}x_j^{\ell(w)c_j}\prod_{j\in \alpha_-}x_j^{-\alpha_j})\,.
\end{eqnarray*}
Again, one checks that the binomial $\displaystyle{\prod_{j\in \alpha_+}x_j^{\alpha_j}-\prod_{j=1}^{p}x_j^{\ell(w)c_j}\prod_{j\in \alpha_-}x_j^{-\alpha_j}}$
satisfies the conditions in Lemma \ref{lemCharBinomial}: it belongs to $I_C$ and has level $\ell(w)$. Since it involves only variables
$x_1,\ldots,x_p$, it is in $I_A$ and we have proved that $w-\theta\cdot  \prod_{j\in \alpha_-}x_j^{-\alpha_j}\in I_A\cdot R$.
Putting all together, one gets that $w\in I_A\cdot R+I_B\cdot R+\langle\rho\rangle$. Since the other inclusion holds, we have shown that
$I_C=I_A\cdot R+I_B\cdot R+\langle\rho\rangle$ and hence, $\sC$ is a gluing of $\sA$ and $\sB$.
\end{proof}

\begin{remark}{\rm
Note that the conditions on $k_1$ and $k_2$ in Theorem \ref{thmHowToGlue},
$k_1\u\in\sB$ and $k_2\u\in\sA$, are sufficient but not necessary as Example \ref{exTwistedYes} shows. In this example, 
we saw later in Example \ref{exTwistedLP} that $\u=\u(A,B)=(1,1,0)$, and
Theorem \ref{thmHowToGlue} says that for $C=3A\cup 2B$, one has that $\sC$ is a gluing of $\sA$ and $\sB$ which is true. But, as observed
in Example \ref{exTwistedYes}, for  $C=A\cup B$ one also has that $\sC$ is a gluing of $\sA$ and $\sB$ while $\u\notin\sA$ and $\u\notin\sB$.
}\end{remark}

\begin{example}\label{exPrimeGluing}{\rm
In $\N^3$, consider the sets $A=\{\a_1,\ldots,\a_4\}$, $B=\{\b_1,\ldots,\b_4\}$ and $C=A\cup B$ with
$$
\a_1=(1,6,7), \a_2=(1,4,5),  \a_3=(1,2,3), \a_4=(2,2,4),
$$
$$
\b_1=(1,1,6), \b_2=(1,1,4), \b_3=(1,1,1), \b_4=(3,3,6).
$$
The matrices $A$ and $B$ have rank 2 and the matrix $C$ has rank 3. The defining ideals of the semigroup rings are
$$
I_A=\langle x_3^3-x_2x_4, x_2^2-x_1x_3, x_2x_3^2-x_1x_4\rangle,\ 
I_B=\langle y_2y_3^2-y_4, y_2^5-y_1^3y_3^2\rangle,
$$
$$
I_C=\langle  
y_2y_3^2-y_4, x_4y_2-y_1y_3^2, x_3^3-x_2x_4, x_2^2-x_1x_3, x_2x_3^2-x_1x_4, x_4y_1^2-y_2^4,
x_4^2y_1-y_2^2y_4, x_4^3-y_4^2
\rangle 
$$
and these are minimal generating sets (all ideals are homogeneous if one gives to the variables $x_1,\ldots,x_4, y_1,\ldots,y_4$ weights 
$14,10,6,8,8,6,3,12$). One has that
$$
I_C=I_A+I_B+\langle x_4^3-y_4^2 \rangle+ \langle x_4y_2-y_1y_3^2, x_4y_1^2-y_2^4, x_4^2y_1-y_2^2y_4\rangle
$$
and $\sC$ is not a gluing of $\sA$ and $\sB$.
But $v(A)=\{(x,y,z)\in\Q^3\,/\ z=x+y\}$ and $v(B)=\{(x,y,z)\in\Q^3\,/\ x=y\}$ so $\u(A,B)=(1,1,2)$ so for $C=3A\cup 2B$, one has
that $\sC$ is a gluing of $\sA$ and $\sB$ by Theorem \ref{thmHowToGlue}. One can check using \cite{Sing} that
$I_C=I_A+I_B+\langle x_4^3-y_4^2\rangle$.
}\end{example}

The following example shows that one has to assume in Theorem \ref{thmHowToGlue}  that $k_1$ and $k_2$ are relatively prime,
i.e., that the necessary conditions in Proposition \ref{propNecCond}
are not sufficient to glue two semigroups.

\begin{example}\label{exNotPrimeNotGluing}{\rm
Consider that subsets $A=\{\a_1,\a_2,\a_3,\a_4\}$ and $B=\{\b_1,\b_2,\b_3,\b_4\}$ in $\N^3$ whose elements are the columns of the matrices
$$
A=\begin{pmatrix}1&1&2&3\\ 6&4&5&3\\ 7&5&7&6\end{pmatrix}\quad,\quad
B=\begin{pmatrix}1&2&3&9\\ 1&2&3&9\\  6&7&8&18\end{pmatrix}
$$
In this example, $v(A)$ and $v(B)$ are the same as in Example  \ref{exPrimeGluing} so the necessary conditions in Proposition \ref{propNecCond}
are satisfied,
and $\u=(1,1,2)$. The difference here is that no multiple of $\u$ belongs to $\langle \a_1,\a_2,\a_3 \rangle$ nor  $\langle \b_1,\b_2,\b_3 \rangle$. 
So the only multiples of $\u$ in $\sA$, respectively $\sB$, are the multiples of $\a_4=3\u$, respectively $\b_4=9\u$ and
one can not find $k_1$ and $k_2$ relatively prime satisfying the hypothesis in Theorem \ref{thmHowToGlue}.
Indeed, we could not find in this example $k_1$ and $k_2$ such that $ k_1A\Join k_2B$.
}\end{example}

But the condition in Theorem \ref{thmHowToGlue} is not an characterization. 
It may occur that we can glue two semigroups $\sA$ and $\sB$ even if 
there does not exist relatively prime integers $k_1$ and $k_2$ such that
 $k_1\u(A,B)\in\sB$ and $k_2\u(A,B)\in\sA$
as the following example shows.

\begin{example}\label{exNotPrimeYesGluing}{\rm
In Example \ref{exNotPrimeNotGluing}, if do not change the first three generators of $\sA$ and $\sB$ but now consider
$\a_4=(5,5,10)$ and $\b_4=(10,10,20)$, we are exactly in the same situation so Theorem \ref{thmHowToGlue} does not apply.
But here one can check using \cite{Sing} that for $C=2A\cup B$, $\sC$ is a gluing of $\sA$ and $\sB$: 
$I_C=I_A+I_B+\langle x_4-y_4\rangle$. So in this example, $\sA$ and $\sB$ can be glued.
}\end{example}

We will summarize the results into this theorem.  

\begin{theorem}\label {thmSummary} 
Let $A$ and $B$ be two finite sets in $\N^n$ satisfying the  rank conditions 
$$
\rk{A\vert B}=n \quad\hbox{and}\quad \rk A+\rk B=n+1
$$
and let $\u=\u(A,B)\in\Z^n$ be the gluable lattice point of $A$ and $B$.
Then, 
\begin{center}
(a) $\Longrightarrow$ (b) $\Longrightarrow$ (c) $\Longleftrightarrow$ (d) 
\end{center}
for the following 4 conditions:
\begin{enumerate}
\item[(a)]
there exist relatively prime positive integers $k_1,k_2 $ such that $k_2\u\in\sA$ and $k_1\u\in\sB$;
\item[(b)]
$\sA$ and $\sB$ can be glued;
\item[(c)]
there exists positive integers $k_1,k_2$ such that $k_2\u\in\sA$ and $k_1\u\in\sB$;
\item[(d)]
the  system $A\cdot X=B\cdot Y$ has a nontrivial solution $(X,Y)$ with $X\in\N^p$ and $Y\in\N^q$.
\end{enumerate}
\end{theorem}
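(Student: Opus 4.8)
The plan is to observe that Theorem \ref{thmSummary} is really a bookkeeping of the three principal results of this section, so I would dispatch the two ``outer'' arrows by direct citation and reserve the genuine work for the single new link $(b)\Rightarrow(c)$. First, the equivalence $(c)\Longleftrightarrow(d)$ is nothing but Lemma \ref{lemEquivSecCond}: condition $(d)$ is its item (\ref{condSyst}) and condition $(c)$ is its item (\ref{condk1k2}) after renaming $a=k_2$ and $b=k_1$, so nothing remains to check there. Likewise $(a)\Longrightarrow(b)$ is exactly the content of Theorem \ref{thmHowToGlue}: the hypothesis of $(a)$, namely that there are relatively prime $k_1,k_2$ with $k_2\u\in\sA$ and $k_1\u\in\sB$, is verbatim the hypothesis of that theorem, whose conclusion $C=k_1A\Join k_2B$ says in particular that $\sA$ and $\sB$ can be glued.

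The remaining implication $(b)\Longrightarrow(c)$ is where I would place the argument, and since $(c)\Longleftrightarrow(d)$ is already in hand it suffices to prove $(b)\Longrightarrow(d)$. Suppose $\sA$ and $\sB$ can be glued, so there exist positive integers $k_1,k_2$ with $C=k_1A\Join k_2B$, that is, $I_C=I_A\cdot R+I_B\cdot R+\langle\rho\rangle$ for a gluing binomial $\rho=\xx^\c-\yy^\d$ with $\c\in\N^p$ and $\d\in\N^q$. Since $I_C$ is the kernel of the map defining $\rC$, which sends $x_j\mapsto\t^{k_1\a_j}$ and $y_j\mapsto\t^{k_2\b_j}$, membership $\rho\in I_C$ is equivalent to the equality $k_1A\cdot\c=k_2B\cdot\d$. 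Setting $X=k_1\c\in\N^p$ and $Y=k_2\d\in\N^q$ then gives a solution of $A\cdot X=B\cdot Y$, and this solution is nontrivial: if $\c=0$ then $\rho=1-\yy^\d\in I_C$ forces $k_2B\cdot\d=0$, whence $\d=0$ because the generators are nonzero vectors in $\N^n$, making $\rho=0$, which is impossible for an extra generator (one would have $I_C=I_A\cdot R+I_B\cdot R$ and hence $\dim\rC=\rk A+\rk B=n+1$, contradicting the rank condition $\rk{A\vert B}=n$). Thus $X\neq 0$, which is precisely the nontrivial solution demanded in $(d)$, proving $(b)\Longrightarrow(d)$ and therefore $(b)\Longrightarrow(c)$.

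I expect the only point requiring care, rather than a true obstacle, to be this nonvanishing of $\c$ (and symmetrically $\d$), since that is exactly what separates a genuine gluing binomial from the trivial relation. A minor caveat worth recording is that one cannot simply invoke Proposition \ref{propNecCond} to get $(b)\Rightarrow(c)$, because the definition of ``can be glued'' permits $k_1,k_2$ that are not relatively prime whereas that proposition is stated for coprime pairs; the binomial computation above, however, is insensitive to coprimality and so applies directly. Finally, I would emphasize that the theorem asserts only these implications and no stronger ones: Examples \ref{exNotPrimeNotGluing} and \ref{exNotPrimeYesGluing} show respectively that $(c)$ need not imply $(b)$ and that $(b)$ need not imply $(a)$, so the displayed chain of arrows is, in the light of these results, the best possible.
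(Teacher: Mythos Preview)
Your proof is correct and follows essentially the same route as the paper: $(a)\Rightarrow(b)$ by Theorem~\ref{thmHowToGlue}, $(c)\Leftrightarrow(d)$ by Lemma~\ref{lemEquivSecCond}, and $(b)\Rightarrow(d)$ via the observation that a gluing binomial $\rho=\xx^\c-\yy^\d\in I_C$ forces $k_1A\cdot\c=k_2B\cdot\d$ (this is exactly the argument the paper records at the start of \S\ref{subsec2ndNecCond}). Your added justification that the solution is nontrivial, and your remark about why Proposition~\ref{propNecCond} cannot be cited verbatim due to the coprimality hypothesis, are careful points the paper leaves implicit.
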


\begin{proof}
(a) $\implies$ (b) is Theorem \ref{thmHowToGlue},
(b) $\implies$ (d) as observed at the beginning of section \ref{subsec2ndNecCond}, and
(c) $\Longleftrightarrow$ (d) is Lemma \ref{lemEquivSecCond}. 
\end{proof}

Iin the previous result, (b) $\not\Rightarrow$ (a) by Example  \ref{exNotPrimeYesGluing}
and (c) $\not\Rightarrow$ (b) by Example  \ref{exNotPrimeNotGluing}.

\section{Consequences and examples}\label{secConsequences}

\subsection{Numerical semigroups}

Two numerical semigroups can always be glued. One recovers this well-known fact from Theorem \ref{thmSummary}.
When $n=1$, the rank conditions (\ref{eqRankCond}) are always satisfied since $\rk{A}=\rk{B}=\rk{A|B}=1$ for any finite subsets $A$ and $B$ of $\N$ defining numerical semigroups.
Moreover, $\u(A,B)=1$ for all $A$ and $B$. Thus, one can always take an integer $k_1$ in $\sB$ and an integer $k_2$ in $\sA$ and get
condition (a) in Theorem \ref{thmSummary} satisfied.

\subsection{Rank 1 semigroups}

The case when $\rk{B}=1$ was studied in \cite{sforum20}. This occurs if either $B=\{\b\}$, i.e., $B$ has only one element ($q=1$), 
or there exists $\b\in\N^n$ such that $B=\b\times (u_1\cdots u_q)$ for $u_1,\ldots,u_q\in\N$, i.e., all the elements of $B$ are multiples of a same 
element $\b\in\N^n$.
This is the most degenerate case and, as observed in Corollary \ref{corNondeg}, if $\sA$ is nondegenerate, then $\rk{B}=1$ is necessary for
$\sA$ and $\sB$ to be glued. 

\begin{theorem}[{\cite[Thms. 3 and 5]{sforum20}}]
If $\rk{A}=n$ and $\rk{B}=1$, i.e., either $B=\{\b\}$, or there exists $\b\in\N^n$ such that $B=\b\times (u_1\cdots u_q)$ for $u_1,\ldots,u_q\in\N$, then $\sA$ and $\sB$ can be glued if and only if a multiple of $\b$ belongs to $\sA$.
\end{theorem}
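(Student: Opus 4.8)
The plan is to rephrase everything through the gluable lattice point. Since $\rk A=n$ and $\rk B=1$, the rank conditions $\rk{A\vert B}=n$ and $\rk A+\rk B=n+1$ hold automatically, so Theorem \ref{thmSummary} is available. Here $v(A)=\Q^n$ and $v(B)=\Q\b$, whence $v(A)\cap v(B)=\Q\b$ and $\u=\u(A,B)$ is the primitive integer vector in the direction of $\b$; write $\b=m\u$ with $m=\gcd(b_1,\ldots,b_n)\ge 1$. The preliminary remark used in both directions is that a multiple of $\b$ lies in $\sA$ if and only if a multiple of $\u$ does: indeed $t\u\in\sA$ forces $t\b=m(t\u)\in\sA$ (add $t\u$ to itself $m$ times), while every multiple of $\b$ is a multiple of $\u$.

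\emph{Necessity.} If $\sA$ and $\sB$ can be glued, then (into a nondegenerate $\sC$, as $\rk{A\vert B}=n$) Theorem \ref{thmSummary} gives, via $(b)\Rightarrow(c)$, positive integers $k_1,k_2$ with $k_2\u\in\sA$; hence $k_2\b=m(k_2\u)\in\sA$ is a multiple of $\b$ in $\sA$. (Directly: a gluing binomial $\rho=\xx^\c-\yy^\d$ has $A\c\in\sA\cap v(B)=\sA\cap\Q\u$, a positive multiple of $\u$ in $\sA$, and one finishes as above.)

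\emph{Sufficiency.} Suppose a multiple of $\u$ lies in $\sA$. Replacing $\b$ by $\gcd(u_1,\ldots,u_q)\,\b$ (which changes neither $\u$ nor the hypothesis) I may assume $\gcd(u_1,\ldots,u_q)=1$, so $\sB=\{\,j\b\mid j\in N\,\}$ for the numerical semigroup $N=\langle u_1,\ldots,u_q\rangle$, which contains every sufficiently large integer. Let $e\ge 1$ be determined by $\Z A\cap\Q\u=e\,\u\,\Z$, where $\Z A$ is the lattice generated by $A$; then $k\u\in\sA$ forces $e\mid k$, and $\Sigma'=\{\,k\in\N_{>0}\mid e\,k\,\u\in\sA\,\}$ is a nonempty subsemigroup of $\N_{>0}$. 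Pick any $s_2\in\Sigma'$ and then $s_1\in N$ with $s_1\equiv 1\pmod{s_2}$ and $s_1$ large enough to lie in $N$, so that $\gcd(s_1,s_2)=1$. Set $k_1=m\,s_1$ and $k_2=e\,s_2$; then $k_1\u=m s_1\u\in\sB$ and $k_2\u=e s_2\u\in\sA$.

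The main obstacle is now that $k_1$ and $k_2$ need \emph{not} be relatively prime, and in fact no relatively prime pair need exist (this is precisely the phenomenon of Example \ref{exNotPrimeYesGluing}), so Theorem \ref{thmHowToGlue} does not apply. I would instead appeal to the intrinsic gluing criterion of Rosales \cite{rosales}: writing $G(S)$ for the subgroup of $\Z^n$ generated by a semigroup $S$, $\sC$ is a gluing of $\sA$ and $\sB$ for $C=k_1A\sqcup k_2B$ if and only if $G(k_1\sA)\cap G(k_2\sB)=\Z\delta$ for some nonzero $\delta\in(k_1\sA)\cap(k_2\sB)$. With the choice above, and using $G(k_2\sB)=k_2m\,\u\,\Z$ together with $G(k_1\sA)\cap\Z\u=k_1e\,\u\,\Z$, one gets $G(k_1\sA)\cap G(k_2\sB)=k_1e\,\u\,\Z\cap k_2m\,\u\,\Z=\operatorname{lcm}(k_1e,k_2m)\,\u\,\Z=m e\,s_1s_2\,\u\,\Z$ since $\gcd(s_1,s_2)=1$; and $\delta:=k_1k_2\,\u=m e\,s_1s_2\,\u$ generates this group while satisfying $\delta=k_1(k_2\u)\in k_1\sA$ and $\delta=k_2(k_1\u)\in k_2\sB$. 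Thus the criterion holds and $C=k_1A\Join k_2B$. The two delicate points are the elementary number theory producing the coprime pair $s_1,s_2$ and, more importantly, replacing Theorem \ref{thmHowToGlue} by the group-theoretic gluing criterion, which is exactly what tolerates a non-coprime pair $(k_1,k_2)$.
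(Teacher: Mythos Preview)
Your argument is correct, and in fact it is more careful than the paper's own proof.

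The paper's proof asserts that when $\rk B=1$ one has $\u(A,B)=\b$ and that conditions (a) and (c) of Theorem~\ref{thmSummary} are \emph{both} equivalent to ``a multiple of $\b$ lies in $\sA$''; from (a)$\Rightarrow$(b)$\Rightarrow$(c) together with (a)$\Leftrightarrow$(c) the characterization would follow. But (a)$\Leftrightarrow$(c) is not true in general in this setting. For instance, take $A=\{(2,0),(0,2)\}$ and $B=\{(2,2)\}$ in $\N^2$: here $\u=(1,1)$, every $k_1$ with $k_1\u\in\sB$ is even and every $k_2$ with $k_2\u\in\sA$ is even, so (a) fails, while (c) holds and $\sA$, $\sB$ can indeed be glued ($I_C=\langle y_1-x_1x_2\rangle$ for $C=A\sqcup B$). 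Thus the paper's one-line argument tacitly relies on the full proofs in \cite{sforum20} rather than standing on its own, whereas your proof is self-contained.

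Your key move---recognizing that Theorem~\ref{thmHowToGlue} may not apply because no coprime pair $(k_1,k_2)$ need exist, and invoking Rosales's group-theoretic criterion $G(k_1\sA)\cap G(k_2\sB)=\Z\delta$ with $\delta\in k_1\sA\cap k_2\sB$ instead---is exactly what is needed. The lattice computation $G(k_1\sA)\cap\Z\u=k_1e\,\u\,\Z$, $G(k_2\sB)=k_2m\,\u\,\Z$, and the choice of $s_1,s_2$ with $\gcd(s_1,s_2)=1$ so that $\operatorname{lcm}(k_1e,k_2m)=mes_1s_2=k_1k_2$ are all correct. Two small remarks: (i) Example~\ref{exNotPrimeYesGluing} that you cite has $\rk B=2$, so it is an analogy rather than an instance of the present situation; the $2\times 2$ example above would illustrate the rank-one case directly. (ii) You are using Rosales's criterion from \cite{rosales}, which the paper cites but does not restate; it is equivalent to Definition~\ref{gluing}, but it would be worth saying so explicitly.
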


\begin{proof}
In this case, $\u(A,B)=\b$ and conditions
(a) and (c) in Theorem \ref{thmSummary} are both equivalent to having a multiple of $\b$ in $\sA$, and the above characterization
follows. 
\end{proof}

\subsection{A case study: Gluing nondegenerate homogeneous semigroups of $\N^2$ in $\N^3$}\label{secHowToDim2}

Given two sequences of integers $0<a_1\ldots < a_{p-2} <c$ and $0<b_1\ldots < b_{q-2} <d$, one can consider
the nondegenerate semigroups $\sA$ and $\sB$ in $\N^2$ generated by the columns of the matrices
$$
A = \begin{pmatrix} 
c & c-a_1& \ldots& c-a_{p-2} & 0\\
0& a_1 &\ldots &a_{p-2}&c\\
\end{pmatrix}\quad\hbox{and}\quad
B= \begin{pmatrix} 
d & d-b_1& \ldots& d-b_{q-2} & 0\\
0& b_1 &\ldots &b_{q-2}&d\\
\end{pmatrix}\,.
$$
Since the columns of $A$ and $B$ add up to the same number,  $c$ and $d$ respectively,
the semigroup rings $k[A]$ and $k[B]$ are graded with the standard grading,
i.e., the ideals $I_A$ and $I_B$ are homogeneous.
Indeed, $I_A$ and $I_B$ are the defining ideals of two projective monomial curves $\C_A\subset\PP^{p-1}$ and $\C_B\subset\PP^{q-1}$
of degree $c$ and $d$ respectively.
We will say that the semigroups $\sA$ and $\sB$ have degree $c$ and $d$.

\smallskip

We know that $\sA$ and $\sB$ can not be glued in $\N^2$ because they are nondegenerate (Theorem \ref{thmDeg}). We will
now show how we can glue them by embedding them in $\N^3$ and gluing them in $\N^3$ where they are both degenerate. 

\smallskip

Suppose that $d$ is relatively prime to $a_i$ for some $i, 1\le i \le p-2$.  
Dividing $c$ by $a_i$, we write  $c= m a_i -r_i$ for some $m\geq 2$ and $0\le r_i < a_i$.  

\smallskip

One can embed $\sA$ and $\sB$ in $\N^3$ by adding to the matrices $A$ and $B$, for example, a zero row.
Moreover, since the ideals $I_A$ and $I_B$ are homogeneous, adding any integer to a row of the matrices will not change the ideal.
So if one considers the following two matrices
\begin{eqnarray*}
A' &=& \begin{pmatrix} 
c+r_i & c-a_1+r_i& \ldots & c-a_{p-2}+r_i & r_i\\
0& a_1 &\ldots &a_{p-2}&c\\
0&0&\ldots&0&0\\
\end{pmatrix}\ \hbox{and}\\
B' &=& \begin{pmatrix} (m-1)d &(m-1)d &\ldots &(m-1)d&(m-1)d\\
d& d-b_1& \ldots& d-b_{q-2}&0\\
0& b_1 &\ldots &b_{q-2}&d\\
\end{pmatrix}
\end{eqnarray*}
one has that $I_A=I_{A'}$ and $I_B=I_{B'}$.
Moreover, $\rk{A'}=\rk{A}=2$, $\rk{B'}=\rk{B}=2$ and $\rk{A'|B'}=3$, and hence the rank conditions (\ref{eqRankCond}) are satisfied. 
Now focusing of the $(i+1)$-th column of $A'$ and the first column of $B'$, we have that 
$\u = \begin{pmatrix}
m-1\\
1\\
0\\
\end{pmatrix}$  
belongs to the intersection of the $\Q$-vector spaces generated by the columns of $A'$ and $B'$, i.e., it is the gluable lattice point of $A'$ and $B'$.
Further, $a_i\u \in \sA$ and $d \u \in \sB$.  Since $(d,a_i) =1$ by $(a)\implies (b)$ of Theorem \ref{thmSummary}, we see that 
$\langle A'\rangle$ and $\langle B'\rangle$ can be glued.  
By Theorem \ref{thmHowToGlue}, $C=dA' \Join a_iB'$ is a gluing of $A'$ and $B'$ and $I_C=I_A+I_B+\langle\rho\rangle$
for $\rho=x_{i+1}-y_1$. Note that $\sC$ is homogeneous of degree $mda_i$.

\smallskip

Thus the two homogeneous nondegenerate semigroups in dimension 2 can be glued after an appropriate embedding in dimension 3. 
This is certainly not the only possible embeddings that can be glued. 

\begin{example}{\rm
If we choose for $\sA$ and $\sB$ two copies of the semigroup defining the twisted cubic, i.e.,
$
A=B=\begin{pmatrix} 3&2&1&0 \\ 0&1&2&3\end{pmatrix}
$, then for $i=2$, one has that $a_i=2$ and hence $m=2$ and $r_i=1$. Thus, for
$$
C=\begin{pmatrix} 12&9&6&3&6&6&6&6 \\ 0&3&6&9&6&4&2&0\\ 0&0&0&0&0&2&4&6\end{pmatrix}
$$
one has that $C=3A'\Join 2B'$ and $I_C=I_A+I_B+\langle x_3-y_1\rangle$, where $I_A\subset k[x_1,\ldots,x_4]$ and $I_B\subset k[y_1,\ldots,y_4]$ are
two copies of the defining ideal of the twisted cubic.
}\end{example}

Inspired by the situation in the previous example, we say that $\sC$ is a {\it selfgluing} of $\sA$ when one can embed a nondegenerate semigroup $\sA\subset\N^n$ in $\N^{2n-1}$
in two different ways, $\langle A'\rangle\subset\N^{2n-1}$ and $\langle A''\rangle\subset\N^{2n-1}$ with $\rk{A'}=\rk{A''}=\rk{A}=n$ and $I_{A'}=I_{A''}=I_A$,
and such that there exist two relatively prime integers $k_1,k_2$ such that
$C=k_1A'\Join k_2A''$. Then, $I_C=I_A+I_B+\langle\rho\rangle$ where $I_A\subset k[\xx]$ and $I_B\subset k[\yy]$ are two copies of the same ideal, the defining ideal of $\rA$, and
$\rho$ is a gluing binomial.
This will be developed in a forthcoming paper.

\subsection{Final illustrating examples}

We finish this paper with a series of example illustrating some interesting facts.

\subsubsection{The number of minimal generators}

If $C=k_1A\Join k_2 B$ then, by definition, the number of elements in a minimal generating set of $I_A$, $I_B$ and $I_C$ (recall that these three ideals
are graded if one gives the correct weights to the variables) are related by $\mu(I_C)=\mu(I_A)+\mu(I_B)+1$.
One has to be careful with the number of minimal generators in $I_C$ since one could have that $\mu(I_C)=\mu(I_A)+\mu(I_B)+1$ with
$\sC$ that is not a gluing of $\sA$ and $\sB$ as the following example shows.

\begin{example}{\rm
For $A=\{(1,6,7), (1,4,5), (2,5,7), (5,5,10)\}$ and $B=\{(1,1,6), (2,2,7),$ $(3,3,8), (10,10,20)\}$, one has that
$I_A$ is minimally generated by 3 binomials, $I_A=\langle  x_2^2x_3^3-x_1^3x_4, x_3^5-x_2^5x_4, x_2^7-x_1^3x_3^2\rangle$,
$I_B$ is minimally generated by 2 elements (it is a complete intersection),
$I_B=\langle y_2^2-y_1y_3, y_3^4-y_1^2y_4 \rangle$ and $I_C$ is minimally generated by 6 elements but it is not a gluing.
One has that $I_C=I_A+I_B+\langle x_4^2-y_4 \rangle+\langle x_4y_1-y_3^2 \rangle$. The thing is that the second minimal generator of
$I_B$ is not minimal in $I_C$.
}\end{example} 

\subsubsection{Linear binomial}

The existence of a binomial of the form $x_i-y_j$ in the ideal $I_C$ for $C=k_1\sqcup k_2B$ does not mean that $C=k_1A\Join k_2B$ as the following example shows.

\begin{example}{\rm
In $\N^3$, consider
$A=\{(4,1,5), (2,1,3), (1,2,3), (3,1,4), (3,3,6)\}$ and 
$B=\{(2,1,1), (3,2,3), (5,3,4), (4,5,11)), (3,3,6)\}$.
One has that $\rk{A}=\rk{B}=2$ and that the gluable lattice point of $A$ and $B$ is $\u=(1,1,2)$.
Using Singular \cite{Sing}, one can check that for $C=A\sqcup B$, $I_C$ is minimally generated by 13 binomials while $I_A$ and $I_B$ are minimally generated
by 4 and 3 binomials respectively, and hence $\sC$ is not a gluing of $\sA$ and $\sB$,
even if $x_5-y_5$ is one of the 13 minimal generators of $I_C$.
}\end{example} 

\subsubsection{Cohen-Macaulay and non Cohen-Macaulay homogeneous surfaces}

Using the way to embed nondegenerate semigroups in higher dimensional spaces where they are degenerate and glue them there as mentioned in Section \ref{secHowToDim2}
together with Theorem \ref{thmCM}, one can easily build Cohen-Macaulay or non Cohen-Macaulay surfaces.
Let's give two examples, the first one obtained by selfgluing a Cohen-Macaulay curve in $\PP^3$ with itself to get a Cohen-Macaulay surface in $\PP^7$, and the other by selfgluing 
a non Cohen-Macaulay curve in $\PP^3$ with itself to get a non Cohen-Macaulay surface in $\PP^7$.

\begin{example}{\rm
The monomial curve in $\PP^3$ defined by the semigroup $\sA$ generated by the columns of the matrix
$A=\begin{pmatrix} 5&4&3&0\\ 0&1&2&5\end{pmatrix}$ is known to be Cohen-Macaulay. It is indeed of Hilbert-Burch and 
$I_A=\langle x_1x_3-x_2^2, x_1x_2x_4-x_3^3, x_1^2x_4-x_2x_3^2\rangle$.
Then, for $C=\begin{pmatrix} 25&20&15&0&20&20&20&20\\ 0&5&10&25&5&4&3&0\\ 0&0&0&0&0&1&2&5\end{pmatrix}$,
the semigroup generated by the columns of $C$ defines a projective surface  in $\PP^7$ that is Cohen-Macaulay by Theorem \ref{thmCM}.
}\end{example}

\begin{example}{\rm
The monomial curve in $\PP^3$ defined by the semigroup $\sA$ generated by the columns of the matrix
$A=\begin{pmatrix} 5&4&1&0\\ 0&1&4&5\end{pmatrix}$ is known to be non Cohen-Macaulay and $I_A$ is minimally generated by 5 elements,
$I_A=\langle x_1x_4-x_2x_3, x_2x_4^3-x_3^4, x_1x_3^3-x_2^2x_4^2, x_1^2x_3^2-x_2^3x_4, x_1^3x_3-x_2^4\rangle$.
Then, for $C=\begin{pmatrix} 25&20&5&0&20&20&20&20\\ 0&5&20&25&5&4&1&0\\ 0&0&0&0&0&1&4&5\end{pmatrix}$,
the semigroup generated by the columns of $C$ defines a projective surface in $\PP^7$ that is not Cohen-Macaulay by Theorem \ref{thmCM}.
}\end{example}

\end{document}